\newtheorem{thm}{Theorem}
\newtheorem{lem}{Lemma}
\newtheorem*{rem}{Remark}
\newtheorem{example}{Example}
\def\spann  {\mathrm{span}}
\def\ohne   {\setminus}
\def\nmin   {n_{\min}}
\def\div    {\textnormal{div}}
\def\R      {\mathbb{R}}
\def\H      {\ensuremath{\op{H}}}
\def\C      {\mathbb{C}}
\def\N      {\mathbb{N}}
\def\ud     {\,\textnormal{d}}
\def\eps    {\varepsilon}
\def\diam   {\textnormal{diam}}
\def\supp   {\textnormal{supp}\,}
\newcommand{\op}[1]{\mathcal{#1}}
\def\fie    {\varphi}
\def\rank   {\textnormal{rank}\,}
\newcommand{\mcO}{\ensuremath{\mathcal{O}}}
\newcommand{\norm}[1]{\ensuremath{\|#1\|}}
\newcommand{\dist}{\textnormal{dist}}
\renewcommand{\L}[1]{\op{L}(#1)}
\newcommand{\e}{{\textnormal{e}}}
\newcommand{\SPND}[2]{({#1},{#2})_{L^2(D)}}
\newcommand{\tnrm}[1]{|\!|\!|#1|\!|\!|}
\begin{document}
\title{Low-rank approximation of elliptic boundary value problems with
  high-contrast coefficients\footnote{This work was supported by the
    DFG collaborative research center SFB~1060.}}
\author{M.~Bebendorf\footnote{Institut f\"ur Numerische Simulation,
Universit\"at Bonn, Wegelerstrasse 6, 53115 Bonn, Germany}}
\date{\today}
\maketitle

\begin{abstract}
We analyze the convergence of degenerate approximations to Green's function of elliptic boundary value problems with high-contrast
coefficients. It is shown that the convergence is independent of the contrast if the error is measured with respect to suitable norms.
This lays ground to fast methods (so-called hierarchical matrix approximations) which do not have to be adapted to the coefficients.
\end{abstract}

\noindent \textit{Keywords:} high-contrast coefficients, hierarchical matrix approximation

\section{Introduction}
Elliptic problems with non-smooth, high-contrast coefficients appear in many
fields of science ranging from the simulation of porous media and
composite materials to the recent field of uncertainty quantification.
The numerical solution of such problems is challenging if all details
of the physical problems are to be resolved due to the  large number 
of degrees of freedom needed for a sufficiently accurate
discretization. The enormous amount of computer memory
and CPU time can be reduced to some extend if one is
satisfied with macroscopic properties of the
solution. The \textit{multiscale finite element method}~\cite{HW97,EH09} and
the \textit{heterogeneous multiscale method}~\cite{EE03} capture
small-scale effects on large scales. These methods rely on special
assumptions on the coefficient such as self-similarities, periodicity
and scale separation.
If such properties cannot be exploited, then the discretization has to
be done with full detail. In this case, the numerical method used to
solve the problem has to be efficient and robust with respect to the
operator's coefficients.

Methods that achieve a computational complexity that scales linearly
with the number of degrees of freedom often rely on multiscale
techniques, too. The \textit{multi-grid method\,}~\cite{MG,BrambleMG,WiNeJa01} relaxes the
error at different scales on coarser grids. \textit{Algebraic multigrid
methods}~\cite{AMG} try to achieve the robustness with respect to
non-smooth coefficients by mostly heuristic strategies.
Another successful class of methods, which are well-suited also for
parallelization, are domain decomposition methods such as the \textit{finite element tearing and interconnect method}~\cite{FETI,FLLPR01}. Although significant progress has been made in making the method robust with respect to variable coefficients (see \cite{MR1367653,klWiDr96,B7:PechSch08,PeSch09a,PeSaSch13}),
the theory still contains assumptions on the coefficient's distribution. 

The \textit{mosaic skeleton method}~\cite{MR99f:15005} and
\textit{hierarchical ($\mathcal{H}$-) matrices}~\cite{MR2000c:65039,MR2001i:65053} are
historically related with the \textit{fast multipole
  method}~\cite{MR88k:82007,MR99c:65012}, which
can also be regarded as a multiscale method. Since $\H$-matrices are based
on substructuring and low-rank approximation rather than multiscale effects, they are
closer related with the \textit{tree code
  algorithm}~\cite{BarnesHut}. While fast multipole methods and the
tree code algorithm are designed only for efficiently applying a non-local
operator to a vector, $\H$-matrices provide an approximate algebra in
which
approximations of fully populated matrices (such as integral operators
and the inverse or the
factors of the LU decomposition of sparse matrices) can be computed with
logarithmic-linear complexity. Although one could use $\H$-matrix
approximations as direct solvers, it is usually more
efficient to employ them as approximate preconditioners.

The existence of $\H$-matrix approximations to the
inverse of finite element~(FE) discretizations was proved in \cite{MBWHInv}
for boundary value problems of scalar elliptic operators
\begin{equation}\label{eq:op1}
\op{L} :=-\div\, A(x)\nabla
\end{equation}
with a symmetric positive-definite coefficient~$A(x)\in\R^{d\times d}$ with $a_{ij}\in L^\infty(\Omega)$.
The proof is based on the existence of
exponentially convergent degenerate approximations 
\begin{equation} \label{eq:degap}
G_k(x,y):=\sum_{i=1}^k u_i(x) v_i(y)\approx G(x,y)
\end{equation}
with suitable functions $u_i$, $v_i$ to Green's function~$G$ for $\op{L}$
and the computational domain $\Omega\subset\R^d$.
Due to the exponential convergence of the approximation~\eqref{eq:degap},
the accuracy~$\eps$ enters the length~$k$ of the sum in~\eqref{eq:degap} only
logarithmically. Furthermore, the proof reveals that $k$ does not depend on the smoothness of~$A$.
However, the bound on $k$ still depends significantly on the contrast
\[
\kappa_A:=\frac{\lambda_{\max}}{\lambda_{\min}},
\]
i.e.\ the ratio of the coefficient's largest
and
smallest eigenvalue
$\lambda_{\max}:=\text{ess sup}_{x\in\Omega} \,\lambda_{\max}(x)$ and
$\lambda_{\min}:=\text{ess inf}_{x\in\Omega} \,\lambda_{\max}(x)$, where
\[\lambda_{\max}(x):=\max_{\lambda\in\sigma(A(x))}\lambda,\quad
\lambda_{\min}(x):=\min_{\lambda\in\sigma(A(x))}\lambda,
\]
and $\sigma(A)$ denotes the spectrum of the matrix~$A$.
The size~$k$ determines the overall complexity of the $\H$-matrix approximation.
Despite the dependence of $k$ on $\kappa_A$ in theory, an impact of high-contrast
coefficients~$A$ on the efficiency of $\H$-matrices has never been
observed in practise.
The aim of this article is to underpin this observation
theoretically. To this end, a norm will be introduced that depends on
the coefficient~$A$ and generalizes the \textit{flux norm}~\cite{BeOw10}. For the construction of
approximations~\eqref{eq:degap}
it will be shown with respect to this norm that  the Kolmogorov $k$-width
of the space of $\op{L}$-harmonic
functions decays exponentially with~$k$ and does not depend on
the contrast, i.e., it will be shown that
\[
k\sim|\log\eps|^{d+1},
\]
whereas in \cite{MBWHInv} we proved
$k\sim\kappa_A^{d/2}|\log\eps|^{d+1}$ with respect to the $L^2$-norm.
While the focus of this article is on the dependence of $k$ on the
contrast in diffusion problems, the recent publication \cite{EnZh04} analyzes $k$ for
high-frequency Helmholtz problems.

The approximation~\eqref{eq:degap} of Green's function can be
used to prove the existence of low-rank approximations to the inverse of
FE discretizations. 
Since the inverse of $\op{L}$ has the representation
\[
(\op{L}^{-1}\fie)(x)=\int_\Omega G(x,y)\fie(y)\ud y,\quad x\in\Omega,
\]
the existence of a degenerate approximation~\eqref{eq:degap}
on a pair of domains $D_1\times D_2\subset\Omega\times\Omega$ leads to the existence of rank-$k$ approximations
\[
(UV^T)_{ij}\approx (\op{L}^{-1}\fie_j,\fie_i)_{L^2(\Omega)},\quad
U_{i\ell}:=\int_\Omega u_\ell\fie_i\ud x,\;V_{j\ell}:=\int_\Omega v_\ell\fie_j\ud x,
\]
to the FE~discretization of $\op{L}^{-1}$ provided that the support of the FE
basis functions~$\fie_i$ and $\fie_j$ satisfy $\supp\fie_i\subset D_1$ and $\supp\fie_j\subset D_2$.
For the matrix approximation result, however, usual
$L^2$-norm  estimates are required as the matrix error is measured with respect
to the spectral or the Frobenius norm. Changing the norm in the final
estimate to the $L^2$-norm introduces a contrast-dependent term in the
error estimate, which due to the exponential convergence enters the
matrix rank~$k$ only logarithmically.

Note that the approximation technique
presented in this article applies not only to operators \eqref{eq:op1}.
In \cite{InvFEM2} we considered general second order elliptic scalar
operators and in \cite{bebeMaxwell} an analogous result was proved for the
curl-curl operator.
In practice, the LU factorization can be used to solve linear systems
significantly faster than the inverse of a matrix. It is therefore important  
to remark that also the Schur complement and the factors of the LU decomposition can be approximated by
hierarchical matrices with logarithmic-linear complexity; see \cite{MBLU} for a
proof. 
The proof in \cite{MBLU} is based on the approximation of the inverse. Hence,
the results on the approximation of the factors of the LU
decomposition directly benefit from the new estimates of this
article. Notice that hierarchical matrix approximations, from an
algorithmic point of view, are constructed independently of the
operator. Hence, this class of methods provides a fast and robust
approach to problems with non-smooth, high-contrast coefficients.

The article is organized as follows.
The way matrices are subdivided into sub-blocks is crucial for the efficiency
of hierarchical matrices. Furthermore, the block structure is responsible for
the properties of the domains on which Green's function is to be
approximated. In Sect.~\ref{sec:hmat}, we will therefore shortly review
the structure of hierarchical matrices. Sect.~\ref{sec:wpoin} contains the
new low-dimensional approximation result with contrast-independent
constant. To prove it, a suitable norm will be introduced in Sect.~\ref{sec:flux}. 
The existence of degenerate approximations to Green's function
on pairs of domains will be treated in Sect.~\ref{sec:innreg}.
This will be based on interior regularity estimates and on the
transfer property of flux norms.
Numerical experiments in Sect.~\ref{sec:num} support our theoretical
findings.

\section{Hierarchical matrices} \label{sec:hmat}
The setting in which approximations of solution operators will be
approximated in this article are
hierarchical matrices. This methodology introduced by
Hackbusch et al.\ \cite{MR2000c:65039,MR2001i:65053} is designed to
handle fully populated matrices such as approximations to the inverse or the
factors of the LU decomposition with logarithmic-linear complexity; see
\cite{Bebendorf:2008,wh09}.

The efficiency of hierarchical matrices is based on low-rank approximations
of each sub-matrix of a suitable partition $P$ of the full set of matrix
indices $I\times I$, $I:=\{1,\dots,n\}$.
The construction of $P$ has to account for two aims. On one hand, it has
to guarantee that the rank $k$ of the approximation
\begin{equation} \label{eq:Plwr}
B_{ts} \approx XY^T, \quad X\in \C^{|t|\times k},\, Y\in \C^{|s|\times k},
\end{equation}
to each block $B_{ts}$, $t\times s\in P$, of a given matrix $B\in\C^{I\times I}$
depends logarithmically on its
approximation accuracy. Here, $B$ denotes a fully populated matrix, e.g., the
inverse of a stiffness matrix $A$ resulting from
FE discretization. On the other hand, $P$
must be computable with logarithmic-linear complexity.
The former issue will be addressed by the so-called \textit{admissibility
condition} in Sect.~\ref{sec:blcltree}, while the latter problem can be
solved by so-called \textit{cluster trees}.

\subsection{Cluster tree}
Searching the set of possible partitions of $I\times I$ for a partition $P$
which guarantees \eqref{eq:Plwr}
seems practically impossible since this set is considerably large.
By restricting ourselves to blocks $t\times s$ made up from rows
$t$ and columns $s$ which are generated by recursive subdivision, $P$ can be
found with almost linear complexity.
The structure which describes the way $I$ is subdivided into smaller parts
is the cluster tree. 
A tree $T_I$ is called a \emph{cluster tree} for an index set $I$ if it
satisfies the following conditions:
\begin{enumerate}
\renewcommand{\labelenumi}{(\roman{enumi})}
\item $I$ is the root of $T_I$;
\item if $t\in T_I$ is not a leaf, then $t$ is a disjoint union of
its sons $S_I(t)=\{t_1,t_2\}\subset T_I$.
\end{enumerate}
We denote the set of leaves of the tree $T_I$ by $\L{T_I}$.

A cluster tree for $I$ can be computed, for instance, by the \textit{bounding box
method} or the \textit{principal component analysis}
\cite{Bebendorf:2008}. The latter methods take into account the geometric information
associated with the matrix indices. A nested dissection approach
\cite{bebeAlgInv} based on the
matrix graph often leads to significantly
better results.

\subsection{Block cluster tree} \label{sec:blcltree}
The approximation results from \cite{MBWHInv,InvFEM2} show that in order
to be able to guarantee a sufficient approximation of each sub-matrix
$B_{ts}$, $t\times s\in P$, of $B$ by a matrix of low rank, the
sub-block $t\times s$ has to satisfy the so-called \emph{admissibility
condition}
\begin{equation}  \label{eq:admcond}
\min\{\diam\,X_t, \diam\,X_s\}\leq \eta\,\dist(X_t,X_s)
\end{equation}
for a given parameter $\eta>0$ or $\min\{|t|,|s|\}\leq \nmin$ holds for
a given block size parameter $\nmin\in\N$. Here,
$$\diam\,X_t:=\sup_{x,y\in X_t} |x-y|\quad \text{and}\quad
\dist(X_t,X_s):=\inf_{x\in X_t,\,y\in X_s} |x-y|$$
and the support $X_t:=\bigcup_{i\in t} X_i$ of a cluster $t\in T_I$ is the
union of the supports $X_i:=\supp\fie_i$ of the basis functions $\fie_i$,
$i\in t$, corresponding to its indices.
Notice that in order to satisfy
\eqref{eq:admcond}, the distance of the supports of $t$ and $s$ has to be large
enough. This condition is caused by the fact that Green's functions
of elliptic differential operators are singular for $x=y$ only.

The partition is usually generated by recursive subdivision of $I\times I$
descending the \textit{block cluster tree} $T_{I\times I}$,
which is a cluster tree for the set of matrix indices $I\times I$ associated
with the descendant mapping $S_{I\times I}$ defined by
\[
S_{I\times I}(t,s) :=
\begin{cases}
\emptyset, & \text{if } S_I(t)=\emptyset \text{ or } S_I(s)=\emptyset, \\
S_I(t)\times S_I(s), & \text{else.}
\end{cases}
\]
The recursion stops in blocks which satisfy \eqref{eq:admcond} 
or which are small enough. The set of leaves
$\L{T_{I\times I}}$ of the block cluster tree $T_{I\times I}$
forms a partition $P$ of $I\times I$.

With a partition $P$ constructed as above, the set of $\H$-matrices with blockwise rank $k$ is defined as
$$\H(P,k):=
\{M\in\R^{I\times I}:\rank M_b\leq k\text{ for all }b\in P\}.$$
The storage requirement for $B\in \H(P,k)$ is of the order $kn\log{n}$.
Multiplying $B$ by a vector can be done with $\mcO(kn\log{n})$ arithmetic
operations. Since the sum of two $\H$-matrices $B_1,B_2\in\H(P,k)$ exceeds
blockwise rank $k$, the sum has to be truncated to $\H(P,k)$. This
can be done with complexity $\mcO(k^2n\log{n})$ if an approximation
error of controllable size can be tolerated.
The complexity of computing an approximation to the product of two
$\H$-matrices is $\mcO(k^2n(\log{n})^2)$;
see~\cite{MR2000c:65039,MR2001i:65053,GRHA03}.

\section{Weighted norms and interior regularity} \label{sec:wpoin}
The existence of finite-dimensional approximation spaces to the
following space will turn out to be crucial for the existence of low-rank
approximations of the discrete inverse.

Let $D\subset\Omega$ be a domain. We investigate the approximation of functions from the space
\[
X(D)=\{u\in H^1(D): a(u,\fie)=0\text{ for all }\fie\in H_0^1(D),\, u=0\text{ on }\partial D\cap\partial\Omega\}
\]
of $\op{L}$-harmonic functions vanishing on $\partial D\cap \partial \Omega$, 
where $a:H^1(D)\times H^1(D)\to\R$,
\[
a(v,w):=\int_D \nabla w^T A\nabla v\ud x,
\]
denotes the bilinear form associated with $\op{L}$.
The aim of this section is to construct a $k$-dimensional approximation space $X_k\subset L^2(K)$ which provides
$A$-independent error estimates, i.e., the Kolmogorov $k$-width
of $X(D)$ is bounded by
\begin{equation}\label{eq:apprxu}
\sup_{u\in X(D)} \inf_{v\in X_k} \frac{\norm{u-v}_K}{\norm{u}_D}\leq \eps_k
\end{equation}
with some norm $\norm{\cdot}_K$ on~$K\subset D$
and $\eps_k\leq cq^{k/d}$, where $c>0$ and $0<q<1$ are independent of~$A$.
The construction of the space~$X_k$ will later be the basis for the
construction of degenerate approximations~\eqref{eq:degap}.

In \cite{MBWHInv} we employed the Poincar\'e inequality on each piece $K_i$, $i=1,\dots,k$, of a sufficiently fine partition of~$K$, i.e.\ $\norm{u-\mu_i}_{L^2(K_i)}\leq c_i\,\diam\,K_i\norm{\nabla u}_{L^2(K_i)}$ with some $\mu_i\in\R$, $i=1,\dots,k$,
which leads to
\[
\norm{u-\tilde{u}}_{L^2(K)}\leq \eps_k\norm{\nabla u}_{L^2(K)},\quad \eps_k:=\max_{i=1,\dots,k} c_i \,\diam\,K_i,
\]
with $\tilde{u}\in X_k$ defined as $\tilde{u}|_{K_i}:=\mu_i$ and $X_k$ the space of piecewise constant functions.
The Caccioppoli-type inequality (see the remark after Lemma~\ref{lem:ipoi})
\begin{equation}\label{eq:cacc2}
\norm{\nabla u}_{L^2(K)}\leq \frac{c_T}{\sigma(K,D)}\norm{u}_{L^2(D)}
\end{equation}
leads to the desired estimate~\eqref{eq:apprxu} provided that
the distance
\[\sigma(K,D):=\dist(K,\partial D\cap\Omega)\]
of $K\subset D$ to $\partial D$ within~$\Omega$ is positive.

The error estimate, however, is not $A$-independent as
the $A$-dependence of $c_T$ in \eqref{eq:cacc2} seems to be unavoidable.
A remedy for the latter difficulty is to use weighted norms. 
\begin{lem} \label{lem:ipoi}
Let $D\subset\Omega$.
Then for any set $K\subset D$ satisfying
$\sigma(K,D)>0$ it holds that
\[\norm{A^{1/2}\nabla u}_{L^2(K)}
\leq \frac{2}{\sigma(K,D)}\norm{\lambda^{1/2}_{\max} u}_{L^2(D)}\]
for all $u\in X(D)$.
\end{lem}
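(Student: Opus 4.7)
The plan is to prove this as a weighted Caccioppoli-type estimate by testing the variational equation $a(u,\varphi)=0$ against a suitable cutoff of $u$, exactly in the spirit of the classical Caccioppoli inequality, but with the $A$-weighted energy on the left and only $\lambda_{\max}$ appearing on the right.

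\medskip

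First I would pick a Lipschitz cutoff function $\eta:\overline{D}\to[0,1]$ with $\eta\equiv 1$ on $K$, $\eta\equiv 0$ on a neighbourhood of $\partial D\cap\Omega$, and $\ABS{\nabla\eta}\leq 1/\sigma(K,D)$ almost everywhere; this is possible precisely because $\sigma(K,D)>0$. The crucial point is that $\varphi:=\eta^2 u$ lies in $H^1_0(D)$: near the interior boundary $\partial D\cap\Omega$ the factor $\eta$ kills the trace, while on $\partial D\cap\partial\Omega$ the function $u$ itself vanishes by the definition of $X(D)$. Hence $\varphi$ is an admissible test function.

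\medskip

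Next I would insert $\varphi=\eta^2 u$ into $a(u,\varphi)=0$ and expand $\nabla(\eta^2 u)=\eta^2\nabla u+2\eta u\nabla\eta$ to obtain
\[
\int_D \eta^2\,\nabla u^T A\nabla u\ud x=-2\int_D \eta u\,\nabla\eta^T A\nabla u\ud x.
\]
The left-hand side is $\int_D \eta^2\ABS{A^{1/2}\nabla u}^2\ud x$. On the right I would apply the Cauchy--Schwarz inequality with respect to the pointwise $A$-inner product to get $\ABS{\nabla\eta^T A\nabla u}\leq\ABS{A^{1/2}\nabla\eta}\,\ABS{A^{1/2}\nabla u}$, and then a further Cauchy--Schwarz in $L^2(D)$. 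Writing $I:=\int_D\eta^2\ABS{A^{1/2}\nabla u}^2\ud x$ this gives
\[
I\leq 2\sqrt{I}\,\B{\int_D u^2\,\ABS{A^{1/2}\nabla\eta}^2\ud x}^{1/2}.
\]

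\medskip

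The final step is to replace the $A$-weighted norm of $\nabla\eta$ by a $\lambda_{\max}$-weighted norm of $u$: pointwise $\ABS{A^{1/2}\nabla\eta}^2=\nabla\eta^T A\nabla\eta\leq\lambda_{\max}\ABS{\nabla\eta}^2\leq\lambda_{\max}/\sigma(K,D)^2$, so
\[
\int_D u^2\ABS{A^{1/2}\nabla\eta}^2\ud x\leq \frac{1}{\sigma(K,D)^2}\norm{\lambda_{\max}^{1/2}u}_{L^2(D)}^2.
\]
Dividing by $\sqrt{I}$ yields $\sqrt{I}\leq 2\sigma(K,D)^{-1}\norm{\lambda_{\max}^{1/2}u}_{L^2(D)}$, and since $\eta\equiv 1$ on $K$ we have $\norm{A^{1/2}\nabla u}_{L^2(K)}^2\leq I$, which is the desired inequality with constant $2$.

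\medskip

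I expect no serious obstacle: the only subtle point is justifying that $\eta^2 u\in H^1_0(D)$ despite $u$ having a nonzero trace on $\partial D\cap\partial\Omega$, which is handled by the requirement that $\eta$ vanishes near $\partial D\cap\Omega$ and that $u$ vanishes on $\partial D\cap\partial\Omega$ by hypothesis. The whole gain in contrast-independence rests on moving the weight $A$ inside all norms before invoking Cauchy--Schwarz, so that only the benign factor $\lambda_{\max}$ (and not the ratio $\kappa_A$) survives the bound on $\ABS{A^{1/2}\nabla\eta}^2$.
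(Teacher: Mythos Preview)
Your proof is correct and follows the same Caccioppoli-type strategy as the paper: test the variational equation with $\xi^2 u\in H_0^1(D)$ and extract the $A$-weighted gradient bound on $K$. The only cosmetic difference is that the paper rewrites $(A\nabla u,\nabla(\xi^2 u))_{L^2(D)}$ as the exact difference of squares $\|A^{1/2}\nabla(\xi u)\|_{L^2(D)}^2-\|u A^{1/2}\nabla\xi\|_{L^2(D)}^2$ (taking $\|\nabla\xi\|_\infty\leq 2/\sigma$), whereas you bound the cross term via Cauchy--Schwarz (taking $\|\nabla\eta\|_\infty\leq 1/\sigma$); both routes yield the same constant~$2$.
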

\begin{proof}
Let $\xi\in C^1(D)$ such that $0\leq\xi\leq 1$, $\xi=1$ in $K$,
$\xi=0$ in a neighborhood of $\partial D\cap\Omega$ and
$$\norm{\nabla\xi}_{\infty,D}\leq \frac{2}{\sigma(K,D)}.$$
We have $\xi^2 u\in H^1(D)$ and
$\xi^2 u=0$ on $\partial D=(\partial D\cap\Omega)\cup
(\partial D\cap\partial\Omega)$. Hence, $\xi^2 u$ can be used as a test
function in the definition of $X(D)$, which leads to $a(u,\xi^2u)=0$.
Using $\nabla(\xi v)=\xi\nabla v+v\nabla\xi$ for
$v\in H^1(D)$, from
\begin{align*}
\SPND{A\nabla u}{\nabla(\xi^2 u)}
&= \SPND{A\nabla u}{\xi\nabla(\xi u)+\xi u\nabla\xi}
= \SPND{\xi A\nabla u}{\nabla(\xi u)+u\nabla\xi}\\
&= \SPND{A\nabla(\xi u)-uA\nabla\xi}{\nabla(\xi u)+u\nabla\xi}\\
&= \SPND{A^{1/2}\nabla(\xi u)-uA^{1/2}\nabla\xi}{A^{1/2}\nabla(\xi u)+uA^{1/2}\nabla\xi}\\
&= \norm{A^{1/2}\nabla(\xi u)}^2_{L^2(D)}-\norm{uA^{1/2}\nabla\xi}_{L^2(D)}^2
\end{align*}
we obtain that
\begin{align*}
\norm{A^{1/2}\nabla u}_{L^2(K)}^2\leq \norm{A^{1/2}\nabla(\xi u)}_{L^2(D)}^2
=\norm{u A^{1/2}\nabla\xi}_{L^2(D)}^2
\leq \norm{\lambda^{1/2}_{\max} u}^2_{L^2(D)}\norm{\nabla\xi}_{\infty,D}^2.
\end{align*}
\end{proof}

\begin{rem}
Using
$\lambda_{\min}^{1/2}\norm{\nabla u}_{L^2(K)}\leq \norm{A^{1/2}\nabla u}_{L^2(K)}$,
Lemma~\ref{lem:ipoi} implies that
\begin{equation}\label{eq:uc}
\norm{\nabla u}_{L^2(K)}
\leq \frac{2\sqrt{\kappa_A}}{\sigma(K,D)}\norm{u}_{L^2(D)}
\end{equation}
for all $u\in X(D)$.
\end{rem}

Assume
for a moment that~$A(x)=\alpha(x) I$ with some positive~$\alpha\in
L^\infty(\Omega)$
and define
\[
\norm{u}_{L_\alpha^2(\Omega)}:=\left(\int_\Omega \alpha |u|^2\ud x\right)^{1/2}.
\]
Due to Lemma~\ref{lem:ipoi}, we obtain the following Caccioppoli inequality with $\alpha$-independent constant 
\[
\norm{\nabla u}_{L^2_\alpha(K)}
\leq \frac{2}{\sigma(K,D)}\norm{u}_{L^2_\alpha(D)},\quad u\in X(D).
\]
Although the weight-independence of the Caccioppoli inequality can be
achieved with respect to the $\norm{\cdot}_{L^2_\alpha}$-norm,
the independence of the Poincar\'e constant
\[
\sup_{u\in H^1(K)} \inf_{\mu\in\R} \frac{\norm{u-\mu}_{L^2_\alpha(K)}}{\norm{\nabla u}_{L^2_\alpha(K)}}
\]
of the weight $\alpha$ is non-trivial.
Weighted Poincar\'e inequalities with weight-independent constant have been
presented in \cite{EfGa09,PeSch09}
for the case that the domain~$K$ is partitioned into a
finite number of disjoint Lipschitz domains $K_i\subset K$,
$i=1,\dots,r$, and $K_0:=K\setminus \bigcup_{i=1}^r \overline{K}_i$
in each of which the weight $\alpha$ is constant, i.e.
\[
\alpha(x)=\alpha_i,\quad x\in K_i,
\]
with given numbers $\alpha_i>0$, $i=0,\dots,r$.
In \cite{EfGa09}, the case that $K_i$ are inclusions of
the domain~$K_0$ satisfying $\alpha_i|K_i|\leq
\alpha_0|K_0|$, $i=1,\dots,r$, is treated. It is proved that the
Poincar\'e constant is weight-independent, but it depends on
the number~$r$ of domains. 
In \cite{PeSch09} a monotonicity of the sequence of coefficients
is assumed in the spirit of \cite{MR1367653,klWiDr96}, for which a
weight-independent Poincar\'e constant can be proved.
The independence of $\alpha$ does not hold in general. This can
be seen from the following example.
\begin{example}
Let $K=(-3,3)^2\subset\R^2$, $K_1=(-2,-1)\times (-1,1)$, and
$K_2=(1,2)\times (-1,1)$. For
\[
\alpha(x)=\begin{cases}
\delta^{-1}, & x\in K_1\cup K_2,\\
1, & \text{else},
\end{cases}
\quad\text{and}\quad
u(x)=\begin{cases}
x_1, & |x_1|\leq 1,\\
1, & 1<x_1<3,\\
-1, & -3<x_1<-1,
\end{cases}
\]
with small $\delta>0$ we obtain for arbitrary $\mu\in\R$
\[
\int_K \alpha |u-\mu|^2\ud x
\geq \int_{K_1} \delta^{-1}(-1-\mu)^2\ud x
+\int_{K_2} \delta^{-1}(1-\mu)^2\ud x=4\delta^{-1}(1+\mu^2)\geq 4\delta^{-1}
\]
and
\[
\int_K \alpha |\nabla u|^2\ud x=12,
\]
which shows
\[
\sup_{u\in H^1(K)} \inf_{\mu\in\R}
\frac{\norm{u-\mu}_{L^2_\alpha(K)}}{\norm{\nabla u}_{L^2_\alpha(K)}}
\geq \frac{1}{\sqrt{3\delta}}.
\]
Therefore, weight-dependent Poincar\'e constants for general
coefficients are unavoidable.
\end{example}

Our aim is to derive approximation estimates \eqref{eq:apprxu} which
do not depend on the coefficient~$A$. Since this cannot be
achieved by a weighted Poincar\'e inequality for general coefficients, in the rest of this chapter a more sophisticated approach to the construction
of a finite-dimensional approximation space~$X_k$ will be presented.

\subsection{Flux norms} \label{sec:flux}
In this section we introduce a norm that will be useful for proving 
estimates of the form~\eqref{eq:apprxu}  with $A$-independent
constants. Note that we consider arbitrary symmetric positive-definite
coefficient matrices~$A(x)\in\R^{d\times d}$. For $v\in H^1(D)$ and
any domain $K\subset D$ with non-empty interior define the $A$-dependent function
\[
\tnrm{v}_{D,K}:=\norm{\phi^D_v}_{1,K},
\]
where \[\norm{\phi}_{1,K}^2:=\norm{\phi}_{L^2(K)}^2+(\diam\,K)^2\,\norm{\nabla\phi}^2_{L^2(K)}\]
and $\phi^D_v\in H^1(D)$ denotes the solution of the Dirichlet problem
\begin{alignat*}{3}
-\Delta\phi_v^D&=\op{L} v &\quad&\text{in }D,\\
\phi_v^D&=v&\quad&\text{on }\partial D,
\end{alignat*}
i.e.\ we have $\phi_v^D=v+z_v^D$, where $z_v^D\in H_0^1(D)$ satisfies
\begin{equation}\label{eq:transfp}
a_\Delta(z_v^D, \fie)=a(v,\fie)-a_\Delta(v,\fie)\quad \text{for all }\fie\in H_0^1(D),
\end{equation}
where $a_\Delta(u,v):=\int_\Omega\nabla u\nabla v\ud x$ denotes the
bilinear form associated with the Laplacian.
The existence and uniqueness of $\phi_v^D\in H^1(D)$ follows from the
Lax-Milgram theorem.
In addition to $\tnrm{\cdot}_{D,K}$, we introduce the
$A$-dependent bilinear form
$(\cdot,\cdot)_{D,K}:H^1(D)\times H^1(D)\to\R$ as
\[
(v,w)_{D,K}:=(\phi_v^D,\phi^D_w)_{L^2(K)}+(\diam\, K)^2\, (\nabla\phi_v^D,\nabla\phi^D_w)_{L^2(K)},
\]
which induces $\tnrm{\cdot}_{D,K}$.

\begin{rem}
The function~$\tnrm{\cdot}_{D,K}$ is a generalization of the so-called
\emph{flux norm} introduced in~\cite{BeOw10}. The latter is defined
as~$\norm{\nabla \phi_v^D}_{L^2(D)}$ for $v\in H^1_0(D)$.
In the following, $D$ will be fixed, while the dependence of~$\tnrm{\cdot}_{D,K}$ on $K$ is of
particular importance for our needs. In particular, we cannot restrict
ourselves to~$H^1_0(D)$. 
\end{rem}

Notice that $\tnrm{v}_{D,K}$ evaluates $\phi_v^D$ only
on~$K\subset D$. Hence, $\tnrm{\cdot}_{D,K}$ cannot be a norm on
$H^1(D)$. The following lemma states that $\tnrm{\cdot}_{D,K}$ is a
semi-norm on~$H^1(D)$ and a norm on $X(D)\subset H^1(D)$.
This is due to the fact that $v\in
X(D)$ implies that $\phi_v^D$ is harmonic and Lemma~\ref{lem:nrmKD}
(see Appendix) implies
\begin{equation}\label{eq:ED2}
\tnrm{v}_{D,D}=\norm{\phi_v^D}_{1,D}\leq c_{D,K}\norm{\phi_v^D}_{1,K}=c_{D,K}\tnrm{v}_{D,K}.
\end{equation}

\begin{lem} \label{lem:skp}
The bilinear from $(\cdot,\cdot)_{D,K}$ is positive Hermitian
(sometimes called semi-inner product) on~$H^1(D)$ and an inner product on~$X(D)$.
\end{lem}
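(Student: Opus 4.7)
The plan is to verify the four defining properties of a semi-inner product (bilinearity, symmetry, positive semi-definiteness on $H^1(D)$) and the additional non-degeneracy on $X(D)$.

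The first three properties rest on a single observation: the Dirichlet-to-potential map $v\mapsto \phi_v^D$ is linear from $H^1(D)$ into $H^1(D)$. This is a direct consequence of the linearity of the defining problem for $\phi_v^D$ together with uniqueness in the Lax--Milgram theorem applied to $-\Delta$ on $H_0^1(D)$: if $\phi_{v_1}^D,\phi_{v_2}^D$ correspond to $v_1,v_2$, then for any $\alpha_1,\alpha_2\in\R$ the combination $\alpha_1\phi_{v_1}^D+\alpha_2\phi_{v_2}^D$ satisfies the Dirichlet problem defining $\phi_{\alpha_1 v_1+\alpha_2 v_2}^D$ and must therefore equal it. Bilinearity of $(\cdot,\cdot)_{D,K}$ is then inherited from the bilinearity of $(\cdot,\cdot)_{L^2(K)}$ applied to both $\phi$ and $\nabla\phi$, symmetry is immediate, and positive semi-definiteness is $(v,v)_{D,K}=\tnrm{v}_{D,K}^2\ge 0$.

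The substantive step is non-degeneracy on $X(D)$. Suppose $v\in X(D)$ with $(v,v)_{D,K}=0$. Since $v\in X(D)$ gives $a(v,\fie)=0$ for all $\fie\in H_0^1(D)$, the transfer equation \eqref{eq:transfp} reduces to $a_\Delta(z_v^D,\fie)=-a_\Delta(v,\fie)$ for all such $\fie$, which is equivalent to saying that $\phi_v^D=v+z_v^D$ is harmonic in $D$. From $(v,v)_{D,K}=0$ I get $\tnrm{v}_{D,K}=\norm{\phi_v^D}_{1,K}=0$, hence $\phi_v^D\equiv 0$ on $K$. Applying the inequality \eqref{eq:ED2} — which is exactly Lemma~\ref{lem:nrmKD} in the appendix, used here to propagate vanishing of the harmonic function $\phi_v^D$ from $K$ to $D$ — yields $\phi_v^D\equiv 0$ in all of $D$. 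Since $\phi_v^D=v$ on $\partial D$ in the trace sense, it follows that $v\in H_0^1(D)$. Testing $a(v,\fie)=0$ with $\fie=v$ and using coercivity of $a$ on $H_0^1(D)$ (uniform ellipticity of $A$) gives $\nabla v=0$ and therefore $v=0$.

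The main obstacle is the propagation step: without the quantitative estimate \eqref{eq:ED2} of Lemma~\ref{lem:nrmKD}, vanishing of $\phi_v^D$ on the set $K$ could not be ruled to extend to $D$, and $\tnrm{\cdot}_{D,K}$ would remain only a semi-norm even on $X(D)$. The harmonicity of $\phi_v^D$ for $v\in X(D)$ is precisely what makes this lemma applicable and thereby separates the semi-inner product case ($H^1(D)$) from the genuine inner product case ($X(D)$).
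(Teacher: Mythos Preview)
Your proof is correct and follows essentially the same route as the paper: linearity of $v\mapsto\phi_v^D$ gives bilinearity and symmetry, and for non-degeneracy on $X(D)$ you use harmonicity of $\phi_v^D$ together with \eqref{eq:ED2}/Lemma~\ref{lem:nrmKD} to pass from $\phi_v^D|_K=0$ to $\phi_v^D|_D=0$, then conclude $v=0$ from the homogeneous Dirichlet problem for $\op{L}$. The only cosmetic difference is that the paper invokes uniqueness of the Dirichlet problem directly, whereas you spell out the coercivity test $a(v,v)=0$; these are the same argument.
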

\begin{proof}
The symmetry of $(\cdot,\cdot)_{D,K}$ is obvious. Its bilinearity follows from
$\phi^D_{\lambda v+w}=\lambda\phi^D_v+\phi^D_w$ for all $\lambda\in\R$, $v,w\in H^1(D)$.
For the positive definiteness assume that~$(v,v)_{D,K}=0$ for $v\in
X(D)$. From~\eqref{eq:ED2} we obtain that $\phi^D_v=0$ in~$D$. Since
$\phi_v^D\in H^1(D)$, also $\phi_v^D|_{\partial D}=0$. Therefore, $v\in H^1(D)$ satisfies
$\op{L} v=0$ in~$D$ and $v=\phi_v^D=0$ on $\partial D$, which
yields $v=0$ in $D$.
\end{proof}

The semi-normed space $(H^1(D),\tnrm{\cdot}_{D,K})$ is a topological
space induced by the semi-norm.
\begin{lem}\label{lem:Xclosed}
The space $X(D)$ is closed in $(H^1(D), \tnrm{\cdot}_{D,K})$.
\end{lem}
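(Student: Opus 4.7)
The plan is to reduce the closure statement for $X(D)$ inside $(H^1(D),\tnrm{\cdot}_{D,K})$ to a completeness statement for the harmonic representatives $\phi_v^D$ inside $(H^1(D),\norm{\cdot}_{1,D})$, using the linear assignment $v\mapsto\phi_v^D$ together with the bound \eqref{eq:ED2}. More precisely, given $(v_n)\subset X(D)$ with $\tnrm{v_n-v}_{D,K}\to 0$ for some $v\in H^1(D)$, I will produce a $v^*\in X(D)$ with $\tnrm{v-v^*}_{D,K}=0$, which is the natural notion of closedness in a (non-Hausdorff) seminormed space.

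First I would upgrade the seminorm-Cauchy condition to an $H^1(D)$-Cauchy condition for $(\phi_{v_n}^D)$. Since $X(D)$ is a linear subspace, $v_n-v_m\in X(D)$, and hence $\phi_{v_n-v_m}^D=\phi_{v_n}^D-\phi_{v_m}^D$ is harmonic in~$D$. Applying \eqref{eq:ED2} to $v_n-v_m$ then gives
\[
\norm{\phi_{v_n}^D-\phi_{v_m}^D}_{1,D}=\tnrm{v_n-v_m}_{D,D}\leq c_{D,K}\,\tnrm{v_n-v_m}_{D,K}\to 0,
\]
so $\phi_{v_n}^D\to\phi^*$ in $H^1(D)$ for some $\phi^*$. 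Harmonicity passes to the limit through the identity $\SPND{\nabla\phi_{v_n}^D}{\nabla\fie}=0$ for $\fie\in H_0^1(D)$. Continuity of the trace $H^1(D)\to H^{1/2}(\partial D)$ identifies $\phi^*|_{\partial D}$ as the $H^{1/2}$-limit of $\phi_{v_n}^D|_{\partial D}=v_n|_{\partial D}$, and since $v_n|_{\partial D\cap\partial\Omega}=0$ for every $n$, this forces $\phi^*|_{\partial D\cap\partial\Omega}=0$.

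Next I would reconstruct the limit inside $X(D)$. Standard Dirichlet theory for $\op{L}$ (via Lax-Milgram applied to the form $a$) supplies a unique $v^*\in H^1(D)$ solving $a(v^*,\fie)=0$ for all $\fie\in H_0^1(D)$ with boundary datum $v^*|_{\partial D}=\phi^*|_{\partial D}$; the vanishing of $\phi^*$ on $\partial D\cap\partial\Omega$ places $v^*\in X(D)$. Its harmonic representative $\phi_{v^*}^D$ solves $-\Delta\phi_{v^*}^D=\op{L} v^*=0$ with $\phi_{v^*}^D|_{\partial D}=v^*|_{\partial D}=\phi^*|_{\partial D}$, so uniqueness of the harmonic Dirichlet problem gives $\phi_{v^*}^D=\phi^*$. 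Consequently
\[
\tnrm{v_n-v^*}_{D,K}=\norm{\phi_{v_n}^D-\phi^*}_{1,K}\leq\norm{\phi_{v_n}^D-\phi^*}_{1,D}\to 0,
\]
and the triangle inequality delivers $\tnrm{v-v^*}_{D,K}=0$.

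The principal obstacle is the implication $\tnrm{v_n-v_m}_{D,K}\to 0\Rightarrow\norm{\phi_{v_n}^D-\phi_{v_m}^D}_{1,D}\to 0$: the seminorm only inspects $\phi_v^D$ on the subset $K\subset D$, so a priori there is no control outside~$K$. This is overcome precisely by \eqref{eq:ED2}, which is available only because $X(D)$ is a vector space and thus the differences $\phi_{v_n-v_m}^D$ are harmonic. A secondary subtlety is handling the trace on $\partial D\cap\partial\Omega$: seminorm convergence provides no direct trace control on $v_n$, but the auxiliary sequence $\phi_{v_n}^D$ does possess a genuine $H^1(D)$-limit, so its trace converges in $H^{1/2}(\partial D)$, and the pointwise vanishing of $v_n|_{\partial D\cap\partial\Omega}$ propagates to $\phi^*$ by continuity.
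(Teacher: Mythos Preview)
Your argument is correct and, in an important respect, more careful than the paper's. Both proofs use \eqref{eq:ED2} to upgrade seminorm control on $K$ to $H^1(D)$-control of the harmonic representatives, but the paper applies \eqref{eq:ED2} directly to $v_k-v$, deduces $\phi_{v_k}^D\to\phi_v^D$ in $H^1(D)$, and then shows that the given limit $v$ itself lies in $X(D)$ via $a(v,\fie)=a_\Delta(\phi_v^D,\fie)=\lim_k a_\Delta(\phi_{v_k}^D,\fie)=0$ together with $v|_{\partial D\cap\partial\Omega}=\phi_v^D|_{\partial D\cap\partial\Omega}=0$. You instead apply \eqref{eq:ED2} only to differences $v_n-v_m\in X(D)$, obtain an $H^1(D)$-limit $\phi^*$ for the sequence $(\phi_{v_n}^D)$, and reconstruct $v^*\in X(D)$ by solving the $\op{L}$-Dirichlet problem with boundary datum $\phi^*|_{\partial D}$, concluding only that $\tnrm{v-v^*}_{D,K}=0$.

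This detour is not cosmetic. Estimate \eqref{eq:ED2} rests on Lemma~\ref{lem:nrmKD} and hence requires $\phi^D_{v_k-v}$ to be harmonic, i.e.\ $v_k-v\in X(D)$, which is precisely what is not yet known about $v$. In fact the literal conclusion $v\in X(D)$ can fail in this non-Hausdorff seminormed setting: if $\psi\in H_0^1(D)$ is supported in $D\setminus\overline{K}$ and not harmonic, and $v\in H_0^1(D)$ solves $\op{L}v=-\Delta\psi$, then $\phi_v^D=\psi$, so $\tnrm{v}_{D,K}=0$ and $v$ is a seminorm-limit of the constant sequence $0\in X(D)$, yet $a(v,\fie)=a_\Delta(\psi,\fie)\neq0$ for suitable $\fie$. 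Your weaker conclusion---closedness modulo seminorm-null elements, equivalently completeness of $(X(D),(\cdot,\cdot)_{D,K})$---is both the appropriate statement in a seminormed space and exactly what the subsequent projection argument needs.
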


\begin{proof}
Let $\{v_k\}_{k\in\N}\subset X(D)$ converge to $v\in H^1(D)$ with
respect to $\tnrm{\cdot}_{D,K}$. From~\eqref{eq:ED2} we obtain
\[\tnrm{v_k-v}_{D,D}\leq
c\tnrm{v_k-v}_{D,K}\to0.\]
In particular, we have that $\norm{\nabla(\phi_{v_k}^D-\phi_v^D)}_{L^2(D)}\to0$.
Hence, for $\fie\in H_0^1(D)$ it holds that
\[
|a(v,\fie)|=|a_\Delta(\phi_v^D,\fie)|\leq
\underbrace{|a_\Delta(\phi_{v_k}^D,\fie)|}_{=0}+|a_\Delta(\phi_{v_k}^D-\phi_v^D,\fie)|\leq
\norm{\nabla(\phi_{v_k}^D-\phi_v^D)}_{L^2(D)}\norm{\nabla \fie}_{L^2(D)}\to0,
\]
which shows that $a(v,\fie)=0$.
Finally, $v|_{\partial D\cap\partial\Omega}=\phi_v^D|_{\partial
  D\cap\partial\Omega}=0$ proves $v\in X(D)$.
\end{proof}

The semi-inner product $(\cdot,\cdot)_{D,K}$ on $H^1(D)$ is sufficient to define an element of
best approximation in the closed subspace~$X(D)$
(cf.~Lemma~\ref{lem:Xclosed}) of~$H^1(D)$. The positive definiteness of
$(\cdot,\cdot)_{D,K}$ on~$X(D)$ (cf.~Lemma~\ref{lem:skp}) implies that any element of best
approximation is unique. Hence, the $(\cdot,\cdot)_{D,K}$-orthogonal projection
\[\op{P}_{D,K}:H^1(D)\to X(D)\]
is well-defined. 

The following equivalence relation will be useful at the end of this
section, when error estimates with respect to $\tnrm{\cdot}_{D,D}$ are reformulated in the usual $L^2(D)$-norm.

\begin{lem} \label{lem:aeq}
There are constants $c_1,c_2>0$ (depending on $A$) such that 
\[
\frac{1}{c_1}\norm{v}_{1,D}\leq \tnrm{v}_{D,D}\leq c_2\norm{v}_{1,D}
\]
for all $v\in H^1(D)$. It holds that $c_1\sim \kappa_A$ and $c_2\sim\lambda_{\max}$.
\end{lem}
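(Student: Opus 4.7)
The plan is to exploit the decomposition $\phi_v^D = v + z_v^D$ with $z_v^D\in H_0^1(D)$ solving the transfer equation~\eqref{eq:transfp}, and to derive both bounds from a workhorse estimate that controls $\norm{\nabla z_v^D}_{L^2(D)}$ by $\norm{\nabla v}_{L^2(D)}$. Rewriting the right-hand side of~\eqref{eq:transfp} as $\int_D ((A-I)\nabla v)\cdot\nabla\fie\ud x$, testing with $\fie = z_v^D$, and using the pointwise spectral bound $|A-I|\leq\lambda_{\max}+1$ produces $\norm{\nabla z_v^D}_{L^2(D)}\leq (\lambda_{\max}+1)\,\norm{\nabla v}_{L^2(D)}$. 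A Poincar\'e inequality on the zero-trace function $z_v^D$ converts this into $\norm{z_v^D}_{1,D}\lesssim \diam(D)(\lambda_{\max}+1)\norm{\nabla v}_{L^2(D)}$, and the triangle inequality $\norm{\phi_v^D}_{1,D}\leq \norm{v}_{1,D}+\norm{z_v^D}_{1,D}$ then immediately delivers the upper bound $\tnrm{v}_{D,D}\leq c_2\norm{v}_{1,D}$ with $c_2\lesssim 1+\lambda_{\max}$, matching the asserted $c_2\sim\lambda_{\max}$.

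For the lower bound I would use the defining relation $a_\Delta(\phi_v^D,\fie)=a(v,\fie)$ for $\fie\in H^1_0(D)$, tested with $\fie=z_v^D=\phi_v^D-v$. Together with $a(v,z_v^D)=a(v,\phi_v^D)-a(v,v)$ this yields the identity
\[
a(v,v)=a(v,\phi_v^D)-a_\Delta(\phi_v^D,z_v^D).
\]
Coercivity $\lambda_{\min}\norm{\nabla v}_{L^2(D)}^2\leq a(v,v)$, Cauchy--Schwarz on the right-hand side, and the triangle bound $\norm{\nabla z_v^D}\leq\norm{\nabla v}+\norm{\nabla\phi_v^D}$ then lead to
\[
\lambda_{\min}\norm{\nabla v}_{L^2(D)}^2\leq(\lambda_{\max}+1)\norm{\nabla v}_{L^2(D)}\norm{\nabla\phi_v^D}_{L^2(D)}+\norm{\nabla\phi_v^D}_{L^2(D)}^2.
\]
An application of Young's inequality with parameter $\lambda_{\min}/2$ absorbs the mixed term into the left-hand side and gives $\norm{\nabla v}_{L^2(D)}\lesssim\kappa_A\norm{\nabla\phi_v^D}_{L^2(D)}$. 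The $L^2$-part of $\norm{v}_{1,D}$ follows from $v=\phi_v^D-z_v^D$ together with Poincar\'e on $z_v^D$, whose gradient is now controlled by $\kappa_A\norm{\nabla\phi_v^D}_{L^2(D)}$; altogether this delivers $c_1\sim\kappa_A$.

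The main obstacle is to avoid a spurious factor of $\kappa_A^2$ in the lower bound. The naive route that applies the triangle inequality to $v=\phi_v^D-z_v^D$ and then reuses the workhorse estimate on $\norm{\nabla z_v^D}$ is circular as soon as $\lambda_{\max}>1$, so $\norm{\nabla v}$ has to be extracted via the coercivity of $a$. Balancing the mixed $\lambda_{\max}\norm{\nabla v}\norm{\nabla\phi_v^D}$ term against $\lambda_{\min}\norm{\nabla v}^2$ with the right Young parameter is precisely what reduces the naive quadratic ratio $\lambda_{\max}^2/\lambda_{\min}^2$ appearing inside the squared inequality to the linear $\kappa_A$ claimed in the lemma.
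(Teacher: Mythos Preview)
Your argument is correct and close to the paper's, but the lower bound is organized a bit differently. The paper observes that the transfer identity, tested with $\fie=z_v^D$, can equally well be read as
\[
a(z_v^D,z_v^D)=a(\phi_v^D,z_v^D)-a_\Delta(\phi_v^D,z_v^D)=\int_D(\nabla z_v^D)^T(A-I)\nabla\phi_v^D\ud x,
\]
i.e.\ the \emph{same} $(A-I)$ structure as in the upper bound, but with the roles of $v$ and $\phi_v^D$ (and of $a_\Delta$ and $a$) interchanged. Coercivity of $a$ on $z_v^D$ then gives directly $\norm{\nabla z_v^D}_{L^2(D)}\leq\frac{|\lambda_{\max}-1|}{\lambda_{\min}}\norm{\nabla\phi_v^D}_{L^2(D)}$, and the triangle inequality $v=\phi_v^D-z_v^D$ finishes with $c_1=\max\{\sqrt{2},\frac{c_A}{\lambda_{\min}}\sqrt{2c_P^2+1}\}$. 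This symmetry argument avoids your Young-inequality step entirely: there is no mixed term to absorb because the quadratic is in $\norm{\nabla z_v^D}$, which appears on both sides with matching degree. Your route---bounding $a(v,v)$ and then absorbing the cross term---also lands on $c_1\sim\kappa_A$ and is perfectly valid; the paper's variant is just a line shorter and makes the duality between the two bounds more transparent.
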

\begin{proof}
The choice $\fie=z_v^D$ in \eqref{eq:transfp} shows
\[
\norm{\nabla z_v^D}_{L^2(D)}^2=\int_D \nabla v^T(A-I)\nabla
z_v^D\ud x\leq |\lambda_{\max}-1|\norm{\nabla v}_{L^2(D)}\norm{\nabla z_v^D}_{L^2(D)}
\]
and hence $\norm{\nabla z_v^D}_{L^2(D)}\leq
|\lambda_{\max}-1|\norm{\nabla v}_{L^2(D)}$. With
$c_A:=|\lambda_{\max}-1|+1\leq \max\{2,\lambda_{\max}\}$ we obtain
\begin{equation}\label{eq:rema1}
\norm{\nabla\phi_v^D}_{L^2(D)}\leq \norm{\nabla
  z_v^D}_{L^2(D)}+\norm{\nabla v}_{L^2(D)}\leq
c_A\norm{\nabla v}_{L^2(D)}.
\end{equation}
In addition, Poincar\'e's inequality leads to
\[
\norm{z_v^D}_{L^2(D)}\leq c_P\,\diam\,D\,\norm{\nabla z_v^D}_{L^2(D)}\leq
c_P\,\diam\,D\,|\lambda_{\max}-1|\norm{\nabla v}_{L^2(D)},
\]
which shows
\begin{equation}\label{eq:rema2}
\norm{\phi_v^D}_{L^2(D)}\leq
\norm{v}_{L^2(D)}+c_Pc_A\,\diam\,D\norm{\nabla v}_{L^2(D)}.
\end{equation}
From \eqref{eq:rema1} and \eqref{eq:rema2} we obtain
\begin{align*}
\tnrm{v}_{D,D}^2&\leq
\left(\norm{v}_{L^2(D)}+c_Pc_A\,\diam\,D\norm{\nabla  v}_{L^2(D)}\right)^2+c_A^2
(\diam\,D)^2\norm{\nabla v}^2_{L^2(D)}\\
&\leq 2\norm{v}^2_{L^2(D)}+2c_P^2c_A^2 (\diam\,D)^2\norm{\nabla
    v}_{L^2(D)}^2+c_A^2 (\diam\,D)^2\norm{\nabla v}^2_{L^2(D)}
\leq c_2^2\norm{v}_{1,D}^2
\end{align*}
if we set $c_2:=\max\{\sqrt{2},c_A\sqrt{2c_P^2+1}\}$.
Similarly, testing \eqref{eq:transfp} with $\fie=z^D_v$ one has
$a(z_v^D,z_v^D)=a(\phi_v^D,z_v^D)-a_\Delta(\phi_v^D,z_v^D)$
and thus
\begin{align*}
\norm{v}_{1,D}^2&\leq
\left(\norm{\phi_v^D}_{L^2(D)}+\frac{c_P c_A}{\lambda_{\min}}\,\diam\,D\,\norm{\nabla
  \phi_v^D}_{L^2(D)}\right)^2
+\left(\frac{c_A}{\lambda_{\min}}\right)^2(\diam\,D)^2\norm{\nabla\phi_v^D}_{L^2(D)}^2\\
&\leq 2\norm{\phi_v^D}^2_{L^2(D)}+\left(\frac{c_A}{\lambda_{\min}}\right)^2(2c_P^2+1)(\diam\,D)^2\norm{\nabla
  \phi_v^D}^2_{L^2(D)}
\leq c_1^2\tnrm{v}_{D,D}^2
\end{align*}
with $c_1:=\max\{\sqrt{2},\frac{c_A}{\lambda_{\min}}\sqrt{2c_P^2+1}\}$.
\end{proof}

\subsection{Approximation from finite-dimensional spaces}
The approximation of a given element~$u\in X(D)$ with respect to~$\tnrm{\cdot}_{D,K}$ can be related to the
approximation of the harmonic function~$\phi_u^D$ with respect to~${\norm{\cdot}_{1,K}}$.
\begin{lem} \label{lem:transf}
Let $V_p=\spann\{\fie_1,\dots,\fie_p\}$,
$W_p=\spann\{\psi_1,\dots,\psi_p\}$ be $p$-dimensional subspaces of
$H^1(D)$ such that
\begin{equation}\label{eq:psifie}
\op{L} \fie_i=-\Delta \psi_i\quad \text{and} \quad \fie_i|_{\partial D}=\psi_i|_{\partial D},\quad i=1,\dots,p.
\end{equation}
Then 
\[
\inf_{v\in V_p}\tnrm{u-\op{P}_{D,K}v}_{D,K}\leq\inf_{w\in W_p}\norm{\phi_u^D-w}_{1,K}
\]
for all $K\subset D$.
\end{lem}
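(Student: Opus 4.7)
The plan is to combine two independent observations: the hypotheses on the bases $\{\fie_i\}$ and $\{\psi_i\}$ make the transfer map $v\mapsto \phi_v^D$ carry $V_p$ onto $W_p$, and the orthogonal projection $\op{P}_{D,K}$ is non-expansive on elements of $X(D)$.

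First I would unpack the definition of $\phi_v^D$ on the subspace $V_p$. For any $v=\sum_{i=1}^p\lambda_i\fie_i\in V_p$, linearity of $\op{L}$ and the hypothesis $\op{L}\fie_i=-\Delta\psi_i$ give
\[
-\Delta\phi_v^D=\op{L} v=-\Delta\Big(\sum_{i=1}^p\lambda_i\psi_i\Big)\quad\text{in }D,
\]
while $\phi_v^D|_{\partial D}=v|_{\partial D}=\sum_i\lambda_i\fie_i|_{\partial D}=\sum_i\lambda_i\psi_i|_{\partial D}$. Uniqueness for the Dirichlet Laplacian therefore yields $\phi_v^D=\sum_i\lambda_i\psi_i\in W_p$, so the linear map $v\mapsto\phi_v^D$ sends $V_p$ onto $W_p$.

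Next, for $u\in X(D)$ we have $\op{P}_{D,K}u=u$ and, by the characterization of the orthogonal projection associated with $(\cdot,\cdot)_{D,K}$ (well-defined by Lemma~\ref{lem:skp} and Lemma~\ref{lem:Xclosed}), $v-\op{P}_{D,K}v$ is $(\cdot,\cdot)_{D,K}$-orthogonal to every element of $X(D)$, in particular to $u-\op{P}_{D,K}v\in X(D)$. The Pythagorean identity then reads
\[
\tnrm{u-v}_{D,K}^2=\tnrm{u-\op{P}_{D,K}v}_{D,K}^2+\tnrm{v-\op{P}_{D,K}v}_{D,K}^2,
\]
so $\tnrm{u-\op{P}_{D,K}v}_{D,K}\leq\tnrm{u-v}_{D,K}$. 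Using linearity of $v\mapsto\phi_v^D$ once more, $\tnrm{u-v}_{D,K}=\norm{\phi_u^D-\phi_v^D}_{1,K}$ by the very definition of $\tnrm{\cdot}_{D,K}$.

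Combining these ingredients and taking the infimum over $v\in V_p$,
\[
\inf_{v\in V_p}\tnrm{u-\op{P}_{D,K}v}_{D,K}\leq\inf_{v\in V_p}\norm{\phi_u^D-\phi_v^D}_{1,K}=\inf_{w\in W_p}\norm{\phi_u^D-w}_{1,K},
\]
where the last equality uses the surjectivity established in the first step. The only subtle point is the Pythagorean step: since $(\cdot,\cdot)_{D,K}$ is merely a semi-inner product on $H^1(D)$, one has to invoke its positive definiteness on $X(D)$ (Lemma~\ref{lem:skp}) to know that the best approximation $\op{P}_{D,K}v$ in $X(D)$ exists and that the residual is orthogonal to all of $X(D)$; everything else is a routine manipulation.
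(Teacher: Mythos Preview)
Your proof is correct and follows essentially the same route as the paper. The paper starts from an arbitrary $w=\sum_i\gamma_i\psi_i\in W_p$, defines $v:=\sum_i\gamma_i\fie_i\in V_p$, and observes $\phi_v^D=w$; you instead show that $v\mapsto\phi_v^D$ maps $V_p$ onto $W_p$, which is the same correspondence viewed from the other side. Both arguments then reduce to the contraction $\tnrm{u-\op{P}_{D,K}v}_{D,K}\le\tnrm{u-v}_{D,K}$ (the paper writes this as $\tnrm{\op{P}_{D,K}(u-v)}_{D,K}\le\tnrm{u-v}_{D,K}$, you spell it out via the Pythagorean identity) together with $\tnrm{u-v}_{D,K}=\norm{\phi_u^D-\phi_v^D}_{1,K}$.
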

\begin{proof}
Let $w=\sum_{i=1}^p\gamma_i \psi_i\in W_p$ with $\gamma_i\in\R$. Then $v:=\sum_{i=1}^p\gamma_i\fie_i\in V_p$ satisfies
$\op{L} v=-\Delta w$ in $D$ and $v=w$ on~$\partial D$. Hence, $\phi^D_v=w$ and from $\phi^D_{u-v}=\phi^D_u-\phi^D_v$ we obtain
\[
\tnrm{u-\op{P}_{D,K}v}_{D,K}=\tnrm{\op{P}_{D,K}(u-v)}_{D,K}\leq \tnrm{u-v}_{D,K}
=\norm{\phi^D_u-\phi^D_v}_{1,K}=\norm{\phi^D_u-w}_{1,K},
\]
which proves that $\inf_{v\in V_p}\tnrm{u-\op{P}_{D,K}v}_{D,K}\leq
\norm{\phi_u^D-w}_{1,K}$. The assertion follows since $w\in W_p$ is arbitrary.
\end{proof}

The latter property will now be used to construct finite-dimensional
spaces~$X_k$ which approximate~$X(D)$ independently of~$A$.
For deriving error estimates it is required that
$\phi_u^D$ has a slightly higher regularity.
The following interior regularity result is proved similar to
\cite[Thm.~8.8]{GT}. While the latter result expects a positive
distance $\dist(K,\partial D)$, our modification assumes only $\sigma(K,D)>0$.
\begin{lem} \label{lem:H2phi}
For any subset~$K\subset D$ satisfying
$\sigma:=\sigma(K,D)>0$ it holds that $\phi_u^D\in H^2(K)$ and
\[
\norm{\partial_i\nabla \phi_u^D}_{L^2(K)}
\leq
\frac{2}{\sigma}\norm{\partial_i \phi_u^D}_{L^2(D)},\quad i=1,\dots,d.
\]
\end{lem}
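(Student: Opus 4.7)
The plan is to apply the Caccioppoli-type argument from Lemma~\ref{lem:ipoi} (with $A=I$) to $w:=\partial_i\phi_u^D$, which is harmonic in $D$. The non-standard part is that $w$ generally has a non-zero trace on $\partial D\cap\partial\Omega$, so one cannot directly plug $\xi^2w$ into the variational equation; I will therefore work with difference quotients of $\phi_u^D$ and pass to the limit.

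First, the hypothesis $u\in X(D)$ gives $\op{L}u=0$ weakly in $D$, so the defining equation $-\Delta\phi_u^D=\op{L}u$ becomes the homogeneous Laplace equation in $D$; in particular $\phi_u^D$ is harmonic in $D$, and because $z_u^D\in H_0^1(D)$ it vanishes on $\partial D\cap\partial\Omega$. Next I take the cutoff $\xi\in C^1(D)$ exactly as in the proof of Lemma~\ref{lem:ipoi}: $0\le\xi\le 1$, $\xi\equiv 1$ on $K$, $\xi\equiv 0$ in a neighborhood of $\partial D\cap\Omega$, and $\norm{\nabla\xi}_{\infty,D}\le 2/\sigma$.

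For $h>0$ small and $i\in\{1,\dots,d\}$ set $w_h:=\Delta_h^i\phi_u^D:=[\phi_u^D(\cdot+he_i)-\phi_u^D]/h$, extending $\phi_u^D$ by zero across $\partial\Omega$ where necessary. Then $\Delta_h^{-i}(\xi^2w_h)\in H_0^1(D)$ is an admissible test function in $\int_D\nabla\phi_u^D\cdot\nabla\psi\,dx=0$; swapping the difference quotient to the other side by the discrete integration-by-parts identity yields $\int_D\nabla w_h\cdot\nabla(\xi^2w_h)\,dx=0$. The same algebraic manipulation as in the proof of Lemma~\ref{lem:ipoi} (with $A$ replaced by $I$) then rewrites this as $\norm{\nabla(\xi w_h)}_{L^2(D)}^2=\norm{w_h\nabla\xi}_{L^2(D)}^2$, and because $\xi\equiv 1$ and $\nabla\xi=0$ on $K$,
\[
\norm{\nabla w_h}_{L^2(K)}=\norm{\nabla(\xi w_h)}_{L^2(K)}\le\norm{\nabla(\xi w_h)}_{L^2(D)}=\norm{w_h\nabla\xi}_{L^2(D)}\le\frac{2}{\sigma}\norm{w_h}_{L^2(D)},
\]
uniformly in $h$. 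Passing $h\to 0$ and invoking the standard weak-compactness characterization of difference quotients gives $\nabla w_h\rightharpoonup\partial_i\nabla\phi_u^D$ in $L^2(K)$ and $w_h\to\partial_i\phi_u^D$ in $L^2(D)$, so $\phi_u^D\in H^2(K)$ and the displayed bound passes to the limit, yielding the claim.

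The main obstacle is justifying admissibility of $\Delta_h^{-i}(\xi^2w_h)$ as an $H_0^1(D)$ function when $K$ can reach up to $\partial D\cap\partial\Omega$: the shifts $\phi_u^D(\cdot+he_i)$ must be defined on $\supp\xi$ for every direction $e_i$. This is exactly where the boundary condition $\phi_u^D=0$ on $\partial D\cap\partial\Omega$ inherited from $u\in X(D)$ is used, since it allows a zero extension of $\phi_u^D$ across $\partial\Omega$ that remains in $H^1$, so that the translates and their difference quotients are well defined irrespective of $i$.
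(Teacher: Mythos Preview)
Your proof is essentially the same as the paper's: both test the harmonicity of $\phi_u^D$ against $\partial_i^{-h}(\xi^2\partial_i^h\phi_u^D)$ with the same cutoff~$\xi$, swap the difference quotient, apply the algebraic identity from the proof of Lemma~\ref{lem:ipoi} with $A=I$, and then pass to the limit via the standard difference-quotient lemmas (the paper cites Lemmas~7.23 and~7.24 of~\cite{GT}, which is your ``weak-compactness characterization''). Your explicit discussion of the zero extension across $\partial D\cap\partial\Omega$ makes transparent the boundary issue that the paper handles by restricting $h<\tfrac{1}{2}\dist(\supp\xi^2\phi_u^D,\partial D)$.
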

\begin{proof}
Due to $u\in X(D)$, we have that $\phi_u^D\in H^1(D)$ is harmonic and $\phi_u^D=0$ on $\partial
D\cap\partial\Omega$.
Let $\xi\in C^\infty(D)$ satisfy $0\leq\xi\leq 1$, $\xi=1$ in $K$,
$\xi=0$ in a neighborhood of~$\partial D\cap\Omega$ and
\[\norm{\nabla\xi}_{\infty,D}\leq \frac{2}{\sigma}.\]
Since $\xi^2 \phi_u^D=0$ on $\partial D=(\partial D\cap\Omega)\cup
(\partial D\cap\partial\Omega)$, we have
$\hat K:=\supp\xi^2\phi_u^D\subset D$. For $0<h<\frac{1}{2}\dist(\hat K,\partial
D)$ define
$\fie:=\partial_i^{-h}(\xi^2\partial_i^h\phi_u^D)\in H_0^1(D)$,
where 
\[\partial_i^hu(x) :=\frac{u(x+he_i)-u(x)}{h}\]
denotes the difference quotient of $u$ in direction $i$.
Due to the dense imbedding of $C_0^\infty(D)$ in
$H_0^1(D)$, we may assume that $\fie\in C_0^\infty(D)$.
Since $\phi_u^D$ is harmonic, from
\begin{align*}
0&=-\int_D \nabla \phi_u^D\cdot\nabla[\partial_i^{-h}(\xi^2\partial_i^h\phi_u^D)]\ud x
=\int_D \nabla \partial_i^h\phi_u^D\cdot\nabla[\xi^2 \partial_i^h\phi_u^D]\ud x\\
&= \int_D \nabla \partial_i^h \phi_u^D\cdot [\xi\nabla (\xi\,\partial_i^h\phi_u^D)+\xi\nabla\xi\,\partial_i^h\phi_u^D]\ud x\\
& = \int_D [\nabla (\xi\partial_i^h \phi_u^D)-\nabla\xi\,\partial_i^h
\phi_u^D]\cdot [\nabla
(\xi\partial_i^h\phi_u^D)+\nabla\xi\, \partial_i^h\phi_u^D]\ud x\\
&=\int_D |\nabla
(\xi\partial_i^h \phi_u^D)|^2-|\nabla\xi \,\partial_i^h \phi_u^D|^2\ud x
\end{align*}
we obtain that
\[
\norm{\nabla\partial^h_i\phi_u^D}^2_{L^2(K)}\leq
\int_D |\nabla (\xi\partial_i^h \phi_u^D)|^2\ud x
=\int_D |\nabla\xi\,\partial_i^h\phi_u^D|^2\ud x
\leq \frac{4}{\sigma^2}\norm{\partial_i^h\phi_u^D}_{L^2(\hat K)}^2.
\]
Hence, we have shown the desired estimate for the finite
differences. The estimate for the differential operators follow from applying two results from~\cite{GT}.
With Lemma~7.23 from~\cite{GT} we obtain
$\norm{\partial_i^h\phi_u^D}_{L^2(\hat K)}\leq\norm{\partial_i\phi_u^D}_{L^2(D)}$.
By Lemma~7.24 from~\cite{GT} it follows that $\phi_u^D\in H^2(K)$ and
\[
\norm{\partial_i\nabla\phi_u^D}_{L^2(K)}\leq
\frac{2}{\sigma}\norm{\partial_i\phi_u^D}_{L^2(D)}.
\]
\end{proof}

Assume that $\Omega$ is polyhedral, and let $\triangle_H$ be a
quasi-uniform polyhedrization of~$\Omega$ with mesh size~$H>0$. We
define the space
\[
W_p:=\{v|_D,\,v\in \op{S}^{1,0}(\triangle_H)\}
\]
of piecewise linear finite elements
$\op{S}^{1,0}(\triangle_H)$ restricted to~$D$.
Obviously, the mesh size~$H$ is connected with the dimension~$p$ of~$W_p$ as 
\begin{equation}\label{eq:HD}
H\leq c_R\frac{\diam\,D}{\sqrt[d]{p}}
\end{equation}
with a constant~$c_R>0$.
Let $K_H\subset\triangle_H$ be the smallest polyhedrization such
that $K\subset K_H\subset D$ and $\sigma(K_H,D)>0$. Note that we are
interested in the limit $H\to0$.
Due to $\phi_u^D\in H^2(K_H)$ (see Lemma~\ref{lem:H2phi}), the nodal interpolation
operator~$\mathfrak{I}_H:L^2(D)\to W_p$ provides the following interpolation
error estimates
\begin{equation}\label{eq:cle}
\norm{\phi_u^D-\mathfrak{I}_H\phi_u^D}_{L^2(K_H)}\leq
c_{\mathfrak{I}} H|\phi_u^D|_{H^1(K_H)},\quad
\norm{\phi_u^D-\mathfrak{I}_H\phi_u^D}_{H^1(K_H)}\leq
c_{\mathfrak{I}} H|\phi_u^D|_{H^2(K_H)}.
\end{equation}

Let another set $K'\subset D$ satisfy $K\subset K_H\subset K'$ such that $\sigma(K,K')>0$.
We set
\[
\rho:=\frac{\diam\,D}{\sigma(K,K')}.
\]
Using $\sigma(K_H,K')\geq\sigma(K,K')-H$ and $H\leq
c_Rp^{-1/d}\,\diam\,D=c_Rp^{-1/d}\rho\,\sigma(K,K')$, we obtain
\[
\sigma:=\sigma(K_H,K')\geq
\left(1-\frac{c_R\rho}{\sqrt[d]{p}}\right)\sigma(K,K')\geq \frac{1}{2}\sigma(K,K')
\]
for $p\geq(2c_R\rho)^d$.
From \eqref{eq:cle}, Lemma~\ref{lem:ipoi} applied to $A=I$, and Lemma~\ref{lem:H2phi} we obtain 
\begin{align*}
\inf_{w\in W_p}\norm{\phi^D_u-w}_{1,K_H}^2
&\leq c^2_{\mathfrak{I}} H^2\left(|\phi^D_u|^2_{H^1(K_H)}+(\diam\,K_H)^2|\phi^D_u|^2_{H^2(K_H)}\right)\\
&\leq 4\frac{c^2_{\mathfrak{I}}  H^2}{\sigma^2}\left(\norm{\phi^D_u}^2_{L^2(K')}
+d(\diam\,K_H)^2\norm{\nabla \phi_u^D}_{L^2(K')}^2\right)
\end{align*}
and hence
\begin{equation}\label{eq:HD1}
\inf_{w\in W_p}\norm{\phi^D_u-w}_{1,K}\leq\inf_{w\in W_p}\norm{\phi^D_u-w}_{1,K_H}
\leq \frac{4dc_{\mathfrak{I}} H}{\sigma(K,K')}\norm{\phi_u^D}_{1,K'}.
\end{equation}

If $\psi_1,\dots,\psi_p$ denotes a basis of $W_p$ and $\fie_1,\dots,\fie_p$
are the corresponding solutions of~\eqref{eq:psifie}, then
Lemma~\ref{lem:transf}, \eqref{eq:HD}, and \eqref{eq:HD1} show the
existence of an at most $p$-dimensional space
\[Y_K:=\op{P}_{D,K}V_p\subset X(D)\] 
with $p\geq (2c_R\rho)^d$ such that
\begin{equation}\label{eq:Xk}
\inf_{v\in Y_K}\tnrm{u-v}_{D,K}\leq \frac{4dc_{\mathfrak{I}} H}{\sigma(K,K')}\norm{\phi_u^D}_{1,K'} 
\leq \frac{4dc_{\mathfrak{I}}c_R}{\sqrt[d]{p}}\rho\,\norm{\phi_u^D}_{1,K'}
=\frac{c_S}{\sqrt[d]{p}}\rho\,\tnrm{u}_{D,K'}
\end{equation}
with the $A$-independent constant $c_S:=4dc_{\mathfrak{I}}c_R$.

For the following theorem the algebraic decay \eqref{eq:Xk} is exploited
recursively on a sequence of nested domains to obtain the desired exponential convergence.

\begin{thm} \label{thm:exappr}
Let $K\subset D$ such that $\eta\,\sigma(K,D)\geq\diam\,D$ with some $\eta>0$. For any $\eps>0$ there is a $k$-dimensional subspace $X_k\subset X(D)$ satisfying
\begin{equation}\label{(dist u}
\inf_{v\in X_k} \tnrm{u-v}_{D,K}\leq \eps\tnrm{u}_{D,D}
\quad\text{for all }u\in X(D)
\end{equation}
provided that $k\geq c_\eta \lceil |\log \eps|\rceil^{d+1}$, where
$c_\eta:=\lceil\eta\max\{c_S\e,2c_R\}\rceil^d$.
\end{thm}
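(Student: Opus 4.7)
The plan is to iterate the single-scale estimate~\eqref{eq:Xk} on a telescope of nested subdomains. I will fix an integer $L\in\N$ (to be chosen at the end), and construct concentric layers $K=K_0\subset K_1\subset\cdots\subset K_L=D$ with the uniform separation $\sigma(K_{j-1},K_j)\geq\sigma(K,D)/L$. A concrete construction is
$$
K_j:=\{x\in D:\dist(x,\partial D\cap\Omega)\geq (L-j)\,\sigma(K,D)/L\};
$$
the triangle inequality gives the required gap, $K\subset K_0$ holds by the very definition of $\sigma(K,D)$, and $K_L=D$. Under the hypothesis $\diam\,D\leq\eta\,\sigma(K,D)$, the ratio $\rho_j:=\diam\,D/\sigma(K_{j-1},K_j)$ is therefore bounded by $L\eta$ for every $j$.

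For each $j=1,\dots,L$ I invoke~\eqref{eq:Xk} on the pair $(K_{j-1},K_j)$ with the common dimension
$$
p:=\lceil(L\eta\max\{c_Se,2c_R\})^d\rceil.
$$
This single choice simultaneously meets the admissibility condition $p\geq(2c_R\rho_j)^d$ from~\eqref{eq:Xk} and pins the contraction factor to
$$
q:=\frac{c_S\rho_j}{\sqrt[d]{p}}\leq\frac{c_SL\eta}{c_SeL\eta}=\frac{1}{e}.
$$
The estimate produces a $p$-dimensional subspace $Y_j\subset X(D)$ such that for every $w\in X(D)$ there exists $v\in Y_j$ with $\tnrm{w-v}_{D,K_{j-1}}\leq q\,\tnrm{w}_{D,K_j}$. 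I take the candidate space to be $X_k:=Y_1+Y_2+\cdots+Y_L\subset X(D)$, of dimension at most $Lp$.

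The approximant is built by peeling layers from the outside in: set $r_0:=u$ and, for $j=1,\dots,L$, choose $v_j\in Y_{L-j+1}$ realizing the one-step estimate applied to $w=r_{j-1}$, then set $r_j:=r_{j-1}-v_j$. Since $Y_j\subset X(D)$, each residual $r_j$ remains in $X(D)$, so the iteration is well-defined and produces $\tnrm{r_j}_{D,K_{L-j}}\leq q\,\tnrm{r_{j-1}}_{D,K_{L-j+1}}$. Telescoping, together with the monotonicity of $\tnrm{\cdot}_{D,\cdot}$ in its second slot (clear from its definition) and the containments $K\subset K_0$, $K_L=D$, yields
$$
\inf_{v\in X_k}\tnrm{u-v}_{D,K}\;\leq\;\tnrm{r_L}_{D,K_0}\;\leq\;q^L\,\tnrm{u}_{D,K_L}\;\leq\;q^L\,\tnrm{u}_{D,D}.
$$

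Choosing $L:=\lceil|\log\eps|\rceil$ gives $q^L\leq e^{-L}\leq\eps$, and the total dimension is bounded via the elementary inequality $\lceil ab\rceil\leq a\lceil b\rceil$ for $a\in\N$:
$$
\dim X_k\;\leq\;Lp\;\leq\;L^{\,d+1}\,\lceil(\eta\max\{c_Se,2c_R\})^d\rceil\;\leq\;c_\eta\,\lceil|\log\eps|\rceil^{d+1}.
$$
The only delicate point in the whole argument is the bookkeeping of the nested geometry, namely securing both the uniform lower bound on the gaps $\sigma(K_{j-1},K_j)$ (which drives the geometric contraction) and the containment $K_L\subset D$ (so that the outermost norm is controlled by $\tnrm{u}_{D,D}$); both are handled by the concentric layered construction above. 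The rest is a direct application of the single-scale estimate~\eqref{eq:Xk} together with a geometric series.
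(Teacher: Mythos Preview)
Your proposal is correct and follows essentially the same approach as the paper: iterate the single-scale estimate~\eqref{eq:Xk} over $\lceil|\log\eps|\rceil$ nested layers with uniform separation, choosing the per-layer dimension so that each step contracts by a factor~$\leq e^{-1}$, and then sum the layer spaces. The only differences are cosmetic: the paper grows the layers outward from~$K$ (writing $K=K_\ell\subset\cdots\subset K_0\subset D$) whereas you shrink inward from~$\partial D\cap\Omega$, and the paper records the per-step factor as $\eps^{1/\ell}$ rather than~$1/e$, but the mechanism and the resulting dimension count are identical.
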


\begin{proof}
Let $\ell=\lceil |\log \eps|\rceil$ and $r_0:=\sigma(K,D)$.
We consider a sequence of nested domains
$$K_j=\{x\in\Omega:\dist(x,K)\leq r_0(\ell-j)/\ell\}, \quad
j=0,\dots,\ell.$$
Notice that $K=K_\ell\subset K_{\ell-1}\subset\cdots\subset K_0\subset
D$ with
\[
\sigma(K_j,K_{j-1})=\frac{r_0}{\ell}=\frac{\sigma(K,D)}{\ell}\geq
\frac{\diam\,D}{\eta\ell}, \quad j=1,\dots,\ell.\]
According to \eqref{eq:Xk} (with the choice $K:=K_j$ and
$K':=K_{j-1}$), there is $Y_{K_j}\subset X(D)$, $\dim{Y_{K_j}}\leq p$,
so that for all $u\in X(D)$
\begin{align} \label{eq:ap}
\begin{split}
\inf_{v\in Y_{K_j}} \tnrm{u-v}_{D,K_j}
&\leq \frac{c_S}{\sqrt[d]{p}}\eta\ell\,\tnrm{u}_{D,K_{j-1}}
\leq \eps^{1/\ell}\tnrm{u}_{D,K_{j-1}},
\end{split}
\end{align}
if we choose $p\geq p_0:=\lceil \eta\ell\max\{c_S \eps^{-1/\ell},2c_R\}\rceil^d$.

Let $e_0:=u\in X(D)$. Estimate \eqref{eq:ap} defines an
element~$v_1\in Y_{K_1}$ and hence $e_1:=e_0-v_1\in X(D)$ so that
\[\tnrm{e_1}_{D,K_1}\leq
\eps^{1/\ell}\,\tnrm{e_0}_{D,K_0}.\]
Similarly, from \eqref{eq:ap}  we obtain
approximants $v_j\in Y_{K_j}$, $j=2,\dots,\ell$, so that with $e_j:=e_{j-1}-v_j$
\[
\tnrm{e_j}_{D,K_j}\leq \eps^{1/\ell}\,\tnrm{e_{j-1}}_{D,K_{j-1}}.
\]
Since $e_0=e_\ell+\sum_{j=1}^\ell v_j$ and $\sum_{j=1}^\ell v_j\in
X_k:=\bigoplus_{j=1}^\ell Y_{K_j}$, we are led to
\[
\inf_{v\in X_k}\tnrm{e_0-v}_{D,K_\ell}\leq \tnrm{e_\ell}_{D,K_\ell}
\leq(\eps^{1/\ell})^\ell \tnrm{e_0}_{D,K_0}
\leq \eps \tnrm{u}_{D,D}.
\]
The dimension $k$ of $X_k$ is bounded by $p\ell$.
From $\eps^{-1/\ell}\leq \e$ we obtain that
\[
p_0\leq \lceil \eta\ell\max\{c_S\e,2c_R\}\rceil^d,
\]
which proves the assertion. 
\end{proof}

\section{Separable approximation of solution operators}
\label{sec:innreg}
In \cite{MBWHInv,InvFEM2} we were able to prove that inverse FE stiffness matrices of scalar elliptic boundary value
problems can be approximated using hierarchical matrices with
logarithmic-linear complexity.
Since we do not want to repeat the proofs from \cite{MBWHInv}, we
concentrate on the central problem of constructing degenerate kernel
expansions
\[G_k(x,y):=\sum_{i=1}^k u_i(x) v_i(y)\]
to Green's function~$G$ for the operator $\op{L}$ satisfying
\begin{itemize}
\item[(i)] $\op{L} G(x,\cdot)=\delta_x$ in $\Omega$,
\item[(ii)] $\op{L} G(x,\cdot)=0$ on $\partial\Omega$
\end{itemize}
for all $x\in\Omega$; for the existence of $G$ see~\cite{MR83h:35033}.
The rest of the proof in \cite{MBWHInv} is based on the following
existence result (see Theorem~3.5 in \cite{InvFEM2}) and
can be applied without changes. 

\begin{thm} \label{thm:old}
Let $D_1,D_2\subset\Omega$ be two domains such that $D_2$ is convex and
\begin{equation}\label{eq:adm1}
\eta\,\dist(D_1,D_2)\geq\diam\,D_2.
\end{equation}
Then for any $\eps>0$ there is a separable approximation
\[
G_k(x,y)=\sum_{i=1}^ku_i(x)v_i(y)\quad\text{with }k\leq c_{\eta,A}|\log\eps|^{d+1}
\]
satisfying 
\[
\norm{G(x,\cdot)-G_k(x,\cdot)}_{L^2(D_2)}\leq\eps\norm{G(x,\cdot)}_{L^2(\hat{D}_2)}
\quad \text{for all }x\in D_1,
\]
where $\hat{D}_2:=\{y\in\Omega:2\eta\,\dist(y,D_2)\leq\diam\,D_2\}$.
\end{thm}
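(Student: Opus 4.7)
The plan is to deduce Theorem~\ref{thm:old} from the abstract approximation result Theorem~\ref{thm:exappr}, by viewing $y\mapsto G(x,y)$, for fixed $x\in D_1$, as an element of the space $X(D)$ on a suitably chosen intermediate domain~$D$ sitting strictly between~$D_2$ and~$\hat D_2$.

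First I would introduce a nested triple $D_2\subset D\subset \tilde D\subset \hat D_2$ by intersecting $\Omega$ with neighbourhoods of $D_2$ of thicknesses of order $\diam D_2/\eta$, so that $\sigma(D_2,D)$ and $\sigma(D,\tilde D)$ are both of order $\diam D_2/\eta$ and $\tilde D\subset \hat D_2$. The admissibility hypothesis~\eqref{eq:adm1} then forces $\dist(\tilde D,D_1)>0$, so $y\mapsto G(x,y)$ is $\op{L}$-harmonic on~$\tilde D$; together with $G(x,\cdot)=0$ on $\partial\Omega$ this yields $G(x,\cdot)\in X(\tilde D)\subset X(D)$. A direct calculation gives $\diam D\leq \eta''\sigma(D_2,D)$ with some $\eta''$ of order $\eta$, which is exactly the geometric assumption needed to apply Theorem~\ref{thm:exappr} to the pair $(D_2,D)$.

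Theorem~\ref{thm:exappr} then produces, for every $\eps'>0$, a $k$-dimensional subspace $X_k\subset X(D)$, independent of~$x$, with $k\leq c_{\eta''}\lceil|\log\eps'|\rceil^{d+1}$ such that
\[
\inf_{v\in X_k}\tnrm{G(x,\cdot)-v}_{D,D_2}\leq \eps'\,\tnrm{G(x,\cdot)}_{D,D}.
\]
Choosing a basis $\{v_1,\dots,v_k\}$ of $X_k$ and expressing the best approximant $v_x:=\op{P}_{D,D_2}G(x,\cdot)=\sum_{i=1}^k u_i(x)v_i$ gives a separable kernel $G_k(x,y)=\sum_{i=1}^k u_i(x)v_i(y)$ of the required form.

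The final step is to convert the flux-norm estimate into the $L^2$-estimate demanded by the theorem. For $w\in X(D)$, the harmonic-extension inequality~\eqref{eq:ED2} and Lemma~\ref{lem:aeq} give
\[
\norm{w}_{L^2(D_2)}\leq\norm{w}_{1,D}\leq c_1\tnrm{w}_{D,D}\leq c_1 c_{D,D_2}\tnrm{w}_{D,D_2},
\]
so the left-hand side of Theorem~\ref{thm:old} is controlled by $\tnrm{G(x,\cdot)-v_x}_{D,D_2}$. On the other side, Lemma~\ref{lem:aeq} yields $\tnrm{G(x,\cdot)}_{D,D}\leq c_2\norm{G(x,\cdot)}_{1,D}$, and the Caccioppoli inequality~\eqref{eq:uc}, applied to $G(x,\cdot)\in X(\tilde D)$ on the interior pair $(D,\tilde D)$, bounds $\norm{G(x,\cdot)}_{1,D}$ by $\norm{G(x,\cdot)}_{L^2(\tilde D)}\leq\norm{G(x,\cdot)}_{L^2(\hat D_2)}$. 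Composing these estimates introduces a constant $C(A,\eta)$, and replacing $\eps'$ by $\eps/C(A,\eta)$ proves the claim; since Theorem~\ref{thm:exappr} converges exponentially, the $A$-dependent factor enters $k$ only through $|\log C(A,\eta)|^{d+1}$, which is absorbed into $c_{\eta,A}$. The main obstacle is precisely this last bookkeeping: the abstract theorem produces its contrast-independent rate in the flux norm $\tnrm{\cdot}_{D,D_2}$, but the target estimate is phrased in the plain $L^2$-norm, and the equivalence constants in Lemma~\ref{lem:aeq} together with the $\sqrt{\kappa_A}$ in~\eqref{eq:uc} all depend on the contrast—tracking them carefully, while ensuring that $\tilde D$ still fits inside the enlarged domain $\hat D_2$ that appears on the right-hand side, is what forces $c_{\eta,A}$ to carry the contrast dependence and leaves the dimension bound of the form $|\log\eps|^{d+1}$.
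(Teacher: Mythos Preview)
The paper does not actually prove Theorem~\ref{thm:old}; it is quoted from \cite{InvFEM2} (Theorem~3.5 there) as the ``old'' baseline result. The original argument, sketched at the beginning of Sect.~\ref{sec:wpoin}, works entirely in the $L^2$-norm: on a nested family of domains one alternates the unweighted Poincar\'e inequality (piecewise constants) with the Caccioppoli inequality~\eqref{eq:uc}, whose constant carries the factor~$\sqrt{\kappa_A}$. Because this factor appears in the single-step estimate that is iterated $\ell\sim|\log\eps|$ times, one has to choose $p\sim(\eta\sqrt{\kappa_A}\,\ell)^d$ at each level, which produces $c_{\eta,A}\sim\kappa_A^{d/2}$.

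Your route is different: you first invoke the flux-norm result Theorem~\ref{thm:exappr} and only afterwards pay the $A$-dependent price once, via Lemma~\ref{lem:aeq} and~\eqref{eq:uc}. This is precisely the strategy of the \emph{last} theorem of Sect.~\ref{sec:innreg} (the unnamed one following Theorem~\ref{thm:indA}), not of Theorem~\ref{thm:old}. Your argument is correct and in fact yields the stronger conclusion $k\lesssim|\log(\eps/C(A,\eta))|^{d+1}$, i.e.\ a $c_{\eta,A}$ that grows only polylogarithmically in~$\kappa_A$ rather than like~$\kappa_A^{d/2}$. So you have not reproduced the proof the paper refers to; you have re-derived Theorem~\ref{thm:old} as a (weaker) corollary of the paper's new machinery. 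That is legitimate, but you should be aware that the whole point of introducing $\tnrm{\cdot}_{D,K}$ was to \emph{improve} on Theorem~\ref{thm:old}, and the direct $L^2$ proof is both more elementary and the historically correct comparison.
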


It is remarkable that this result holds
for arbitrary coefficients $a_{ij}\in L^\infty(\Omega)$
of the operator~$\op{L}$ satisfying $\lambda_{\min}>0$ because in this case Green's function does not
possess any higher regularity. 
Although the approximation is independent of the smoothness of the
coefficient~$A$, it can be seen from the proof in \cite{InvFEM2} that the estimate on~$k$ 
(i.e.\ the constant $c_{\eta,A}$) depends on the contrast~$\kappa_A$
as~$\kappa_A^{d/2}$.

In this section it will be proved using
the $\tnrm{\cdot}_{D,K}$-norm that the dependence on $\kappa_A$ can be avoided. Note that the approximation technique
presented in this article applies not only to operators \eqref{eq:op1}.
In \cite{InvFEM2} we considered general second order elliptic scalar
operators and in~\cite{bebeMaxwell} an analogous result was proved for the
curl-curl operator. For practical reasons it is important to remark
that also the Schur complement and the factors of the
LU decomposition can be handled by hierarchical matrices with
logarithmic-linear complexity. This paves the way to approximate
direct methods that do not suffer from the well-known effect of fill-in.
The proof \cite{MBLU} is based on the
approximation of the inverse. Hence, also the results in \cite{MBLU} directly
benefit from the new estimates of this article.

The next theorem is a variant of Theorem~\ref{thm:old}, in which the
approximation error (or the rank of the approximation) does not depend
on the coefficient~$A$. To this end, the results obtained in the previous sections for arbitrary elements in $X(D)$ will now be applied to a particular element.
Although the entries of~$A$ are only measurable, it can be proved that
$g_x:=G(x,\cdot)\in H^1(\Omega\ohne B_\eps(x))$ for all $\eps>0$;
see~\cite{MR83h:35033}. Hence, $g_x|_D$ is in~$X(D)$ for all $x\in \Omega\setminus \overline{D}$.

\begin{thm} \label{thm:indA}
Let $D_1,D_2\subset\Omega$ satisfy \eqref{eq:adm1}.
Then for any $\eps>0$ there is a separable approximation
\[
G_k(x,y)=\sum_{i=1}^k u_i(x) v_i(y)\quad\text{with }
k\leq c_{\eta}\lceil|\log \eps|\rceil^{d+1},
\]
so that for all $x\in D_1$
\begin{equation} \label{(supG-Gk}
\tnrm{G(x,\cdot)-G_k(x,\cdot)}_{\hat D_2,D_2}
\leq\eps\tnrm{G(x,\cdot)}_{\hat{D}_2,\hat{D}_2},
\end{equation}
where $\hat{D}_2$ is defined in Theorem~\ref{thm:old} and
$c_\eta := \lceil 2(\eta+1)\max\{c_S \e,2c_R\}\rceil^d$.
\end{thm}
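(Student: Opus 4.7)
The natural plan is to set $D:=\hat D_2$ and $K:=D_2$ in Theorem~\ref{thm:exappr}, specialize the resulting finite-dimensional subspace~$X_k\subset X(\hat D_2)$ to the particular functions $g_x:=G(x,\cdot)$ for $x\in D_1$, and read off a separable expansion by expanding the approximant in a fixed basis of $X_k$.

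First I would verify the geometric hypothesis of Theorem~\ref{thm:exappr} for the pair $(K,D)=(D_2,\hat D_2)$. By the definition of $\hat D_2$, every $y\in\hat D_2$ satisfies $\dist(y,D_2)\leq\diam D_2/(2\eta)$, so
\[
\diam\hat D_2\leq \diam D_2+2\cdot\frac{\diam D_2}{2\eta}=\Bigl(1+\frac{1}{\eta}\Bigr)\diam D_2.
\]
Moreover, for $y\in\partial\hat D_2\cap\Omega$ one has $2\eta\,\dist(y,D_2)=\diam D_2$, and since $D_2\subset\hat D_2$ we conclude $\sigma(D_2,\hat D_2)\geq\diam D_2/(2\eta)$. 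Combining both estimates yields $\diam\hat D_2/\sigma(D_2,\hat D_2)\leq 2(\eta+1)$, so Theorem~\ref{thm:exappr} is applicable with the auxiliary parameter $\eta':=2(\eta+1)$; the resulting dimension bound is $c_{\eta'}=\lceil 2(\eta+1)\max\{c_S\mathrm{e},2c_R\}\rceil^d$, which matches the stated $c_\eta$.

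Next I would argue that $g_x\in X(\hat D_2)$ for each $x\in D_1$. The admissibility condition $\eta\,\dist(D_1,D_2)\geq\diam D_2$ combined with $\dist(\hat D_2,D_2^c)\leq \diam D_2/(2\eta)$ (complement taken in $\Omega$) shows that $x\notin\overline{\hat D_2}$. By the regularity result cited before the theorem, $g_x\in H^1(\Omega\setminus B_\eps(x))$ for small $\eps>0$, hence $g_x|_{\hat D_2}\in H^1(\hat D_2)$. Since $g_x$ is $\op{L}$-harmonic away from $x$ and vanishes on $\partial\Omega$, it satisfies $a(g_x,\fie)=0$ for all $\fie\in H_0^1(\hat D_2)$ and $g_x=0$ on $\partial\hat D_2\cap\partial\Omega$, i.e.\ $g_x|_{\hat D_2}\in X(\hat D_2)$.

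Finally, choose a basis $\{v_1,\dots,v_k\}$ of the $k$-dimensional space $X_k\subset X(\hat D_2)$ provided by Theorem~\ref{thm:exappr}. For each $x\in D_1$ the best $(\cdot,\cdot)_{\hat D_2,D_2}$-approximation of $g_x$ in $X_k$ has a unique representation $\sum_{i=1}^k u_i(x)\,v_i(\cdot)$; define $G_k(x,y):=\sum_{i=1}^k u_i(x)\,v_i(y)$. The bound \eqref{eq:Xk}-type estimate from Theorem~\ref{thm:exappr} applied to $u=g_x\in X(\hat D_2)$ yields exactly \eqref{(supG-Gk} with the claimed rank. The only subtle point is the verification that $g_x\in X(\hat D_2)$ (in particular the $H^1$-regularity on $\hat D_2$, which relies on the external citation), together with the observation that our abstract approximation theorem already delivers a subspace $X_k$ independent of~$u$, so that a single basis $\{v_i\}$ serves all $x\in D_1$ and the approximation automatically takes separable form.
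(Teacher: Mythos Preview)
Your proposal is correct and follows essentially the same route as the paper: verify the geometric condition $\eta'\sigma(D_2,\hat D_2)\geq\diam\hat D_2$ with $\eta'=2(\eta+1)$, invoke Theorem~\ref{thm:exappr} with $K=D_2$, $D=\hat D_2$ to obtain a fixed subspace $X_k\subset X(\hat D_2)$, check that $g_x=G(x,\cdot)\in X(\hat D_2)$ for $x\in D_1$ via $\dist(D_1,\hat D_2)>0$ and the cited $H^1$-regularity, and then expand the approximant in a basis of $X_k$ to read off the separable form. The paper's proof is the same argument, only slightly terser in justifying $g_x\in X(\hat D_2)$.
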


\begin{proof}
Notice that because of
$\dist(D_1,\hat{D}_2)>0$, we have $g_x|_{\hat{D}_2}\in X(\hat{D}_2)$
for all $x\in D_1$. Since $\diam\,\hat{D}_2\leq (1+1/\eta)\,\diam\,D_2$, it holds that
\[
\sigma(D_2,\hat{D}_2)=\frac{1}{2\eta}\diam\,D_2\geq\frac{1}{2(\eta+1)}\diam\,\hat{D}_2.
\]
Hence, Theorem~\ref{thm:exappr} can be applied with
$K:=D_2$, $D:=\hat D_2$, and $\eta'=2(\eta+1)$.

Let $\{v_1,\dots,v_k\}$ be a basis of the
subspace $X_k\subset X(\hat{D}_2)$ with $k\geq c_{\eta'}
\lceil|\log \eps|\rceil^{d+1}$.
By means of~\eqref{(dist u} we can decompose $g_x$ as
$g_x=\tilde{g}_x+e_x$ with $\tilde{g}_x\in X_k$ and
\[\tnrm{e_x}_{\hat{D}_2,D_2}\leq\eps\tnrm{g_x}_{\hat{D}_2,\hat{D}_2}.\]
Expressing $\tilde{g}_x$ in the basis of $X_k$, we obtain
\[
\tilde{g}_x=\sum_{i=1}^k u_i(x) v_i
\]
with coefficients $u_i(x)$ depending on $x\in D_1$.
The function $G_k(x,y):=\tilde{g}_x(y)$ satisfies \eqref{(supG-Gk}.
\end{proof}

The previous theorem shows that $k$ is bounded independently of the coefficient~$A$. Estimate~\eqref{(supG-Gk}, however, is an estimate
with respect to the $\tnrm{\cdot}_{D,K}$-norm.
If the arguments from \cite{MBWHInv} are to be applied in order to estimate
the rank of $\H$-matrix approximations, the usual $L^2$-norm (as in
Theorem~\ref{thm:old}) has to be
used.

\begin{thm}
Let $D_1,D_2\subset\Omega$ satisfy \eqref{eq:adm1}.
Then for any $\eps>0$ there is a separable approximation
\[
G_k(x,y)=\sum_{i=1}^k u_i(x) v_i(y)\quad\text{with}\quad
k\leq c_{\eta}\lceil|\log (\eps \kappa_A^{-3/2}/\lambda_{\max})|\rceil^{d+1},
\]
so that for all $x\in D_1$
\[
\norm{G(x,\cdot)-G_k(x,\cdot)}_{L^2(D_2)}
\leq \eps\norm{G(x,\cdot)}_{L^2(\hat{D}_2)},
\]
where $\hat{D}_2$ is defined in Theorem~\ref{thm:old} and
$c_\eta := \lceil (4\eta+2)\max\{c_S \e,2c_R\}\rceil^d$.
\end{thm}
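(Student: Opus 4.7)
The plan is to run Theorem~\ref{thm:indA} (more precisely, its source Theorem~\ref{thm:exappr}) at a tighter accuracy $\eps''>0$ on an intermediate shell between $D_2$ and $\hat D_2$, and then translate the resulting flux-norm bound into an $L^2$-bound via Lemma~\ref{lem:aeq} on both sides, with a Caccioppoli-type estimate~\eqref{eq:uc} inserted on the right. Each of the latter steps contributes a factor that is polynomial in the contrast, and these factors are precisely what produce the scaling $\eps\,\kappa_A^{-3/2}/\lambda_{\max}$ inside the logarithm.

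Concretely, set $D_2^*:=\{y\in\Omega:4\eta\,\dist(y,D_2)\leq\diam D_2\}$, so that $D_2\subset D_2^*\subset\hat D_2$ with $\sigma(D_2,D_2^*)=\sigma(D_2^*,\hat D_2)=\diam D_2/(4\eta)$ and $\diam D_2^*\leq(1+1/(2\eta))\diam D_2$. An elementary computation then gives $\diam D_2^*/\sigma(D_2,D_2^*)\leq 4\eta+2$, which is precisely the factor appearing in the stated $c_\eta$. Because the admissibility condition~\eqref{eq:adm1} forces $\dist(D_1,D_2^*)>0$, the restriction $g_x:=G(x,\cdot)|_{D_2^*}$ lies in $X(D_2^*)$ for every $x\in D_1$. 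Hence Theorem~\ref{thm:exappr} applied with $K=D_2$ and $D=D_2^*$ yields a $k$-dimensional subspace $X_k\subset X(D_2^*)$ with $k\leq c_\eta\lceil|\log\eps''|\rceil^{d+1}$ and a best approximation $\tilde g_x\in X_k$ satisfying $\tnrm{g_x-\tilde g_x}_{D_2^*,D_2}\leq\eps''\,\tnrm{g_x}_{D_2^*,D_2^*}$. Expanding $\tilde g_x$ in a fixed basis $v_1,\dots,v_k$ of $X_k$ yields the separable form $G_k(x,y)=\sum_i u_i(x)v_i(y)$.

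It remains to translate the flux-norm estimate into the advertised $L^2$-estimate. Setting $e:=g_x-\tilde g_x\in X(D_2^*)$, one has the chain
\[
\norm{e}_{L^2(D_2)}\leq\norm{e}_{1,D_2^*}\leq c_1\,\tnrm{e}_{D_2^*,D_2^*}\leq c_1 c_{D_2^*,D_2}\,\tnrm{e}_{D_2^*,D_2},
\]
where the middle step is Lemma~\ref{lem:aeq} (so $c_1\sim\kappa_A$) and the last is \eqref{eq:ED2}. On the right, Lemma~\ref{lem:aeq} first gives $\tnrm{g_x}_{D_2^*,D_2^*}\leq c_2\norm{g_x}_{1,D_2^*}$ with $c_2\sim\lambda_{\max}$, and then \eqref{eq:uc} applied to $g_x\in X(\hat D_2)$ on the pair $D_2^*\subset\hat D_2$ yields $(\diam D_2^*)\norm{\nabla g_x}_{L^2(D_2^*)}\leq C(\eta)\sqrt{\kappa_A}\,\norm{g_x}_{L^2(\hat D_2)}$, hence $\norm{g_x}_{1,D_2^*}\leq C'(\eta)\sqrt{\kappa_A}\,\norm{g_x}_{L^2(\hat D_2)}$. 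Chaining these produces an estimate of the form $\norm{e}_{L^2(D_2)}\leq C''(\eta)\,\kappa_A^{3/2}\lambda_{\max}\,\eps''\,\norm{g_x}_{L^2(\hat D_2)}$; choosing $\eps''$ of order $\eps\,\kappa_A^{-3/2}/\lambda_{\max}$ then gives the claimed rank bound. The main obstacle is purely bookkeeping---verifying that every geometric constant depends only on $\eta$ and that the contrast powers collapse exactly to $\kappa_A^{3/2}\lambda_{\max}$---but no deeper difficulty is expected, since the $\sqrt{\kappa_A}$ in the Caccioppoli step arises directly from the passage $\lambda_{\min}^{1/2}\norm{\nabla u}\leq\norm{A^{1/2}\nabla u}$ already exploited in~\eqref{eq:uc}.
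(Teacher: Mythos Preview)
Your proposal is correct and follows essentially the same route as the paper: the intermediate shell $D_2^*$ is exactly the paper's $D_2'$, Theorem~\ref{thm:exappr} is applied with $K=D_2$, $D=D_2^*$, and $\eta'=4\eta+2$, and the conversion to $L^2$ proceeds via the same chain (Lemma~\ref{lem:aeq} on the left, \eqref{eq:ED2} for the error, Lemma~\ref{lem:aeq} and the Caccioppoli estimate~\eqref{eq:uc} on the right). The only additional detail the paper spells out that you leave under ``bookkeeping'' is the scaling bound $c_{D_2^*,D_2}\leq\hat c\,(\diam D_2^*/\diam D_2)^{d/2}$, which indeed depends only on $\eta$.
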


\begin{proof}
As in the proof of Theorem~\ref{thm:indA} we have $g_x|_{\hat{D}_2}\in X(\hat{D}_2)$ for all
$x\in D_1$. Let
\[D'_2:=\{y\in\Omega:4\eta\,\dist(y,D_2)\leq\diam\,D_2\}.\]
Then $D_2\subset D_2'\subset \hat{D}_2$, $\diam\,D'_2\leq (1+1/(2\eta))\,\diam\,D_2$, and 
\[
\sigma(D_2,D'_2)=\frac{1}{4\eta}\diam\,D_2\geq \frac{1}{4\eta+2}\diam\,D'_2.
\]
Hence, Theorem~\ref{thm:exappr} can be applied with
$K:=D_2$, $D:=D'_2$, and $\eta'=4\eta+2$, which yields the
subspace $X_k\subset X(D'_2)$ with $k\geq c_{\eta'}
\lceil|\log \eps|\rceil^{d+1}$.
By means of~\eqref{(dist u} we can decompose $g_x$ as
$g_x=\tilde{g}_x+e_x$ with $\tilde{g}_x\in X_k$ and $e_x\in X(D'_2)$
such that
\begin{align*}
\norm{e_x}_{L^2(D_2)}&\leq
c_1\tnrm{e_x}_{D'_2,D'_2}\leq  c_1 c_{D'_2,D_2}\tnrm{e_x}_{D'_2,D_2}\\
&\leq c_1 c_{D'_2,D_2}\eps\tnrm{g_x}_{D'_2,D'_2}
\leq c_1 c_2 c_{D'_2,D_2} \eps\norm{g_x}_{1,D'_2}\\
&=c_1 c_2 c_{D'_2,D_2}\eps\sqrt{\norm{g_x}^2_{L^2(D'_2)}+(\diam\,D'_2)^2
  \norm{\nabla g_x}^2_{L^2(D'_2)}},
\end{align*}
where we used Lemma~\ref{lem:aeq} and \eqref{eq:ED2}. From \eqref{eq:uc} we obtain that
\[
\norm{\nabla g_x}_{L^2(D'_2)}\leq \frac{2\sqrt{\kappa_A}}{\sigma(D'_2,\hat{D}_2)}\norm{g_x}_{L^2(\hat{D}_2)}.
\]
The dependence of $c_{D,K}$ on the domains $D$ and $K$ is not
explicitly known. A scaling argument reveals that \[c_{D,K}\leq
\hat c_{D,K}\left(\frac{\diam\,D}{\diam\,K}\right)^{d/2},\] where
$\hat c_{D,K}>0$ depends on the
shapes of $D$ and $K$ but not on their diameters. 
Due to $\sigma(D'_2,\hat{D}_2)=\sigma(D_2,D'_2)$, we have
\[
\frac{\diam\,D'_2}{\sigma(D'_2,\hat{D}_2)}=4\eta+2,
\]
\[
c_{D'_2,D_2}\leq
\hat c_{D'_2,D_2}\left(\frac{\diam\,D'_2}{\diam\,D_2}\right)^{d/2}
\leq \hat c_{D'_2,D_2}\left(1+\frac{1}{2\eta}\right)^{d/2}
\]
and we obtain from $c_1\sim\kappa_A$ and $c_2\sim\lambda_{\max}$ that
\begin{align*}
\norm{e_x}_{L^2(D_2)}&\leq c_1 c_2 \hat c_{D'_2,D_2}\left(1+\frac{1}{2\eta}\right)^{d/2} \eps
\sqrt{1+4\kappa_A(4\eta+2)^2}\,\norm{g_x}_{L^2(\hat{D}_2)}\\
&\leq c \lambda_{\max}\kappa_A^{3/2} \eps \norm{g_x}_{L^2(\hat{D}_2)}.
\end{align*}
The assertion follows with the same arguments as used in the proof of Theorem~\ref{thm:indA}.
\end{proof}

Compared with the ``old'' Theorem~\ref{thm:old}, in which $k$ depends
on $\kappa_A$ as $k\sim \kappa_A^{d/2}|\log\eps|^{d+1}$, the new proof
via $A$-dependent norms $\tnrm{\cdot}_{D,K}$ significantly  improves
the rank estimate in the $L^2$-norm to
\[k\sim
|\log(\eps\kappa_A^{-3/2}/\lambda_{\max})|^{d+1},\] 
i.e., the contrast $\kappa_A$ enters the complexity of $\H$-matrices only
logarithmically.

\section{Numerical experiments} \label{sec:num}
In this section the influence of the coefficient~$A$ in the
differential operator \eqref{eq:op1} 
will be investigated numerically on the unit square $\Omega:=[0,1]^3$ in $\R^3$.
To this end, we choose $r$ balls $\Omega_i=B_{s_i}(x_i)$, $i=1,\dots,r$,
centered at randomly generated centers $x_i\in\Omega$; see Fig.~\ref{fig:cubsp}.
\begin{figure}[htb]
\centering
\includegraphics[angle=0,height=7cm]{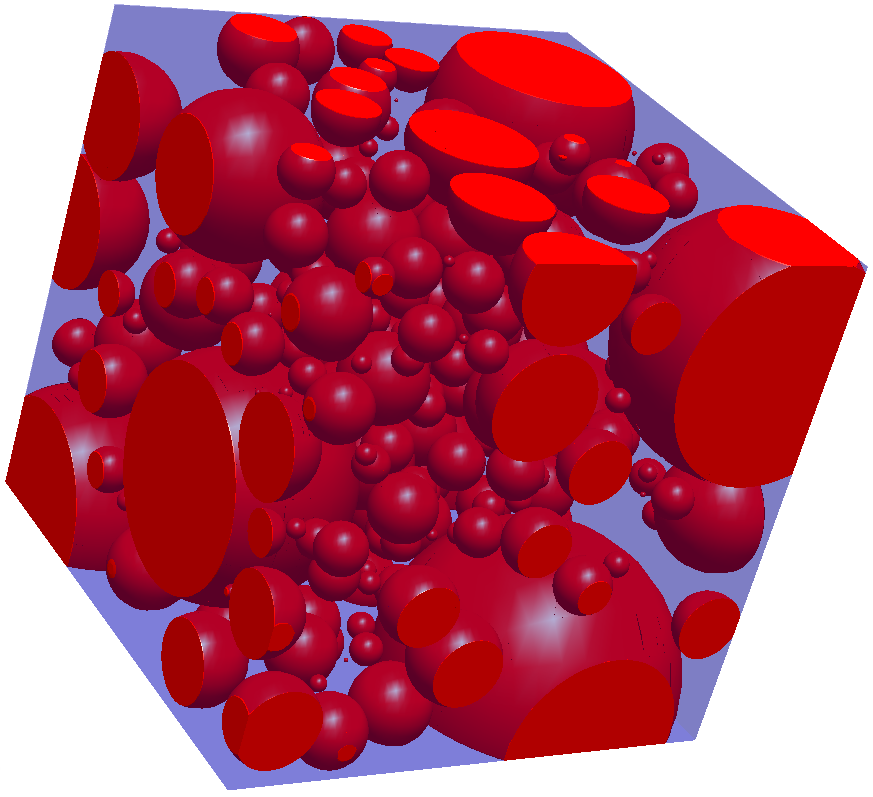}
\caption{$p=200$ inclusions with random coefficients in $[0,M]$.} \label{fig:cubsp}
\end{figure}
For the coefficient $A=\alpha I$ we use
\[
\alpha(x)=\begin{cases}
\alpha_i, & x\in \Omega_i,\\
1, & \text{else,}
\end{cases}
\]
where each $\alpha_i\in[0,M]$ is randomly chosen.
Table \ref{tab:hc3} contains the storage requirement per degree
of freedom when approximating the
inverse of standard FE discretizations using hierarchical matrices
with relative accuracy $\eps=10^{-3}$. The emphasis of these tests is not on
the scaling behavior, i.e.\ the dependence of the complexity with respect to
the number of degrees of freedom~$n$. Such tests were published in
\cite{MBWHInv,InvFEM2,bebeMaxwell} and in many other articles.
Here, we are primarily interested in the
dependence of the rank $k$ on the number of domains~$r$ and on the amplitude $M$
of the coefficient $\alpha$. Therefore, only one choice of $n$ will be
considered. The table shows the storage requirement of the
hierarchical matrix approximation to the inverse. Since the storage
depends linearly on $k$, this allows a direct comparison.
\begin{table}[htb]
\centering
\begin{tabular}{r|rrrrr}
$p\setminus M$ & 1 & 10 & 100 & 1000 & 10000 \\
\hline
1 & 22.92 & 22.84 & 22.63 & 22.54 & 22.07 \\
10 & 22.54 & 22.69 & 22.42 & 18.94 & 17.34 \\
100 & 22.74 & 22.84 & 23.58 & 18.15 & 15.51\\
1000 & 22.83 & 22.59 & 22.12 & 16.00 & 10.78
\end{tabular}
\caption{Storage per degree of freedom in kByte for $n=166\,375$.}
\label{tab:hc3}
\end{table}
Apparently, the storage requirement is bounded with respect to the number of
domains~$r$ and with respect to the amplitude~$M$. The complexity actually
improves with $p$ and $M$.

\section*{Appendix}
Let $D\subset\R^d$ be a bounded domain and let $K\subset D$ be a subdomain. We denote the dual space of~$H^1(D)$ by $\widetilde{H}^{-1}(D)$.

\begin{lem} \label{lem:one}
The Newton potential operator $N_K:\widetilde{H}^{-1}(D)\to H^1(D)$ defined by
\[
(N_K\fie)(y):=\int_K S(x,y)\fie(x)\ud x,\quad y\in D, 
\]
with 
\[S(x,y):=\frac{1}{4\pi}\frac{1}{|x-y|}\]
is continuous, i.e.\ $\norm{N_K\fie}_{H^1(D)}\leq
c_D\norm{\fie}_{\widetilde{H}^{-1}(D)}$.
\end{lem}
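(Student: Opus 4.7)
The plan is to establish the bound on a dense subset and then extend by continuity. For $\phi\in L^2(K)$ (extended by zero to $D$ and to all of $\R^d$), which is dense in $\widetilde{H}^{-1}(D)$, I would fix a bounded smooth auxiliary domain $\Omega'$ with $D\subset\subset\Omega'$ and split $u:=N_K\phi$ as $u=w+v$, where $w\in H^1_0(\Omega')$ solves the Dirichlet problem $-\Delta w=\phi$ in $\Omega'$ and $v:=u-w$. Since $N_K\phi$ is the standard Newton potential, $-\Delta u=\phi$ holds on all of $\R^d$ in the distributional sense, so $v$ is harmonic in $\Omega'$.

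The bound on $w$ follows from Lax--Milgram applied on $H^1_0(\Omega')$: coercivity via Poincar\'e yields $\norm{w}_{H^1(\Omega')}\leq c_{\Omega'}\norm{\phi}_{H^{-1}(\Omega')}$. The crucial observation is that since $\supp\phi\subset K\subset D$, the pairing of $\phi$ with any $\psi\in H^1_0(\Omega')$ depends only on $\psi|_D$, and $\norm{\psi|_D}_{H^1(D)}\leq\norm{\psi}_{H^1(\Omega')}$; hence $\norm{\phi}_{H^{-1}(\Omega')}\leq\norm{\phi}_{\widetilde{H}^{-1}(D)}$, which gives $\norm{w}_{H^1(D)}\leq\norm{w}_{H^1(\Omega')}\leq c\norm{\phi}_{\widetilde{H}^{-1}(D)}$.

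For $v$, the key step is to estimate its trace on $\partial\Omega'$ and then use interior regularity of harmonic functions. Since $w|_{\partial\Omega'}=0$, we have $v|_{\partial\Omega'}=u|_{\partial\Omega'}$. For any fixed $y\in\partial\Omega'$ we have $\dist(y,\bar D)\geq\delta>0$, so $S(\cdot,y)|_D$ belongs to $C^\infty(\bar D)\subset H^1(D)$ with $\norm{S(\cdot,y)}_{H^1(D)}$ bounded uniformly in $y\in\partial\Omega'$ by a constant depending on $D$ and $\dist(D,\partial\Omega')$. Viewing $\phi\in L^2(D)$ as an element of $\widetilde{H}^{-1}(D)$ through the natural embedding, the integral $(N_K\phi)(y)=\int_D S(x,y)\phi(x)\ud x$ coincides with the duality pairing $\langle\phi,S(\cdot,y)|_D\rangle$, and therefore $|N_K\phi(y)|\leq\norm{\phi}_{\widetilde{H}^{-1}(D)}\norm{S(\cdot,y)}_{H^1(D)}\leq C\norm{\phi}_{\widetilde{H}^{-1}(D)}$ uniformly on $\partial\Omega'$. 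Classical interior estimates for harmonic functions (maximum principle together with gradient estimates on compact subsets of $\Omega'$) then yield $\norm{v}_{H^1(D)}\leq\norm{v}_{C^1(\bar D)}\cdot c_D\leq c'_D\norm{v}_{L^\infty(\partial\Omega')}\leq C\norm{\phi}_{\widetilde{H}^{-1}(D)}$.

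Adding the estimates for $w$ and $v$ gives $\norm{N_K\phi}_{H^1(D)}\leq c_D\norm{\phi}_{\widetilde{H}^{-1}(D)}$ for all $\phi\in L^2(K)$, and the continuous extension to $\widetilde{H}^{-1}(D)$ follows by density. The main subtlety is the boundary-value step: one must justify that $u|_{\partial\Omega'}$ is controlled by the dual norm of $\phi$, which relies on choosing $\Omega'$ strictly larger than $D$ so that $S(\cdot,y)|_D$ is a legitimate test function in $H^1(D)$. The analogous pairing with $S(\cdot,y)\chi_K$ would fail because of the jump of $\chi_K$ across $\partial K$, so the enlargement to $\Omega'$ is what makes duality applicable.
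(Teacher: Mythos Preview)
Your proof is correct and takes a genuinely different route from the paper. The paper proceeds by truncating the fundamental solution with a radial cutoff $\xi$, setting $u_\xi(x)=\int_{\R^d}\xi(|x-y|)S(x,y)\fie(y)\ud y$ so that $u_\xi=N_K\fie$ on $D$, and then invokes a known mapping result (cited from Steinbach's monograph) for this truncated Newton potential as an operator $H^{-1}(\R^d)\to H^1(\R^d)$; the final step is the same restriction-of-test-functions argument you use, namely $\norm{\fie}_{H^{-1}(\R^d)}\le\norm{\fie}_{\widetilde H^{-1}(D)}$ because $\supp\fie\subset D$.

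Your decomposition $u=w+v$, with $w\in H^1_0(\Omega')$ solving the Dirichlet problem and $v$ harmonic, trades that cited potential-theoretic estimate for two elementary ingredients: Lax--Milgram for $w$, and the pointwise duality bound $|u(y)|=|\langle\fie,S(\cdot,y)|_D\rangle|\le\norm{\fie}_{\widetilde H^{-1}(D)}\norm{S(\cdot,y)}_{H^1(D)}$ on $\partial\Omega'$ followed by the maximum principle and interior gradient estimates for $v$. This is more self-contained and makes the role of the enlarged domain $\Omega'$ explicit. The paper's version is shorter once the external result is granted and sidesteps the mild regularity check needed to apply the classical maximum principle to $v$ on $\overline{\Omega'}$; for that step you should record that $w$ is harmonic near $\partial\Omega'$ (since $\supp\fie\subset K\subset\!\subset\Omega'$), which together with smoothness of $\Omega'$ gives $v\in C(\overline{\Omega'})$. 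Both arguments rest on the same density extension and the same dual-norm comparison via restriction of test functions.
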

\begin{proof}
Since $\widetilde{H}^{-1}(D)$ can be
regarded as the closure of $C_0^\infty(D)$ with respect to 
$\norm{\cdot}_{H^{-1}(\R^d)}$,
we may consider $\fie\in C_0^\infty(D)$ and define
\[
u(x):=(N_K\fie)(x)=\int_K S(x,y)\fie(y)\ud y.
\]
Let $\xi\in
C_0^\infty([0,\infty))$ be a non-negative cut-off function satisfying
$\xi(t)=1$ for $t\in[0,2R]$ and $\xi(t)=0$ for $t>3R$, where $R>0$ is chosen such that $D\subset
B_R(0)$. With
\[
u_\xi(x):=\int_{\R^d} \xi(|x-y|)S(x,y)\fie(y)\ud y.
\]
it holds that $u_\xi(x)=u(x)$ for $x\in D$ and thus
$\norm{u}_{H^1(D)}=\norm{u_\xi}_{H^1(D)}\leq
\norm{u_\xi}_{H^1(\R^d)}$.
In \cite[p.\ 109]{STEINB_BOOK} it is proved that
\[
\norm{u_\xi}_{H^1(\R^d)}\leq c\norm{\fie}_{H^{-1}(\R^d)}
\]
with $c=c(R)$. The assertion follows
from
\[
\norm{\fie}_{H^{-1}(\R^d)}=\sup_{0\neq v\in H^1(\R^d)}
\frac{(\fie,v)_{L^2(\R^d)}}{\norm{v}_{H^1(\R^d)}}
\leq \sup_{0\neq v\in H^1(D)} \frac{(\fie,v)_{L^2(D)}}{\norm{v}_{H^1(D)}}=\norm{\fie}_{\widetilde{H}^{-1}(D)}.
\]
\end{proof}

\begin{lem} \label{lem:nrmKD}
Let $u\in H^1(D)$
be harmonic. Then the norm of $u$ in $D$ is bounded by the norm in the
$C^{1,1}$ domain $K\subset D$, i.e.
\[
\norm{u}_{1,D}\leq c_{D,K}\norm{u}_{1,K}
\]
with $c_{D,K}>0$ independent of $u$.
\end{lem}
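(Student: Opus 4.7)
The statement is a reverse restriction estimate for harmonic functions: the full $\|\cdot\|_{1,D}$-norm is to be controlled by the restricted $\|\cdot\|_{1,K}$-norm. My plan is a compactness and unique-continuation argument, with Lemma~\ref{lem:one} supplying the Newton-potential bound needed to promote interior strong convergence to strong convergence on all of $D$. Abstractly, the subspace $\mathcal{H}(D):=\{u\in H^1(D):\Delta u=0\text{ in }\mathcal{D}'(D)\}$ is closed in $H^1(D)$, since distributional harmonicity passes to weak limits, so it is a Banach space. The restriction map $R\colon\mathcal{H}(D)\to H^1(K)$ is bounded and, by the real-analyticity of harmonic functions, injective. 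The lemma is equivalent to saying $R^{-1}$ is bounded on its image.

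I would proceed by contradiction. Suppose there is a sequence $(u_n)\subset\mathcal{H}(D)$ with $\|u_n\|_{1,D}=1$ and $\|u_n\|_{1,K}\to 0$. By weak compactness in $H^1(D)$, extract a subsequence with $u_n\rightharpoonup u$ weakly. The limit is again harmonic (closedness of $\mathcal{H}(D)$), and $u_n\to 0$ strongly in $L^2(K)$ forces $u|_K=0$; by unique continuation, $u\equiv 0$ in $D$. An interior Caccioppoli estimate for harmonic functions combined with Rellich gives strong convergence $u_n\to 0$ in $H^1(K')$ for every relatively compact $K'\Subset D$.

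The main obstacle is promoting this interior strong convergence to strong convergence on all of $H^1(D)$, and this is precisely where Lemma~\ref{lem:one} enters. Introduce a smooth cutoff $\chi\in C^\infty(\bar{D})$ with $\chi\equiv 1$ in a neighborhood of $\partial D$ and $\chi\equiv 0$ in a neighborhood of $\bar{K}$, and split $u_n=\chi u_n+(1-\chi)u_n$. The function $w_n:=(1-\chi)u_n$ has compact support in $D$, hence lies in $H^1_0(D)$; since $u_n$ is harmonic, $-\Delta w_n=2\nabla\chi\cdot\nabla u_n+u_n\Delta\chi$ is an $L^2(D)$-function supported in $\supp\nabla\chi\Subset D$, whose $\widetilde{H}^{-1}(D)$-norm tends to zero by the interior strong convergence of Step~2. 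Inverting the Laplacian (equivalently, representing $w_n$ via the Newton potential, bounded by Lemma~\ref{lem:one}) yields $\|w_n\|_{1,D}\to 0$.

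The delicate remaining piece is $\chi u_n$, which is harmonic outside the compact transition region $\supp\nabla\chi$ and carries the boundary behavior of $u_n$ near $\partial D$. Its Cauchy data on $\partial(\supp\chi)$ are controlled by the already-established interior convergence, so one closes the argument by solving a well-posed Dirichlet problem on the shell between $\supp\chi$ and $\partial D$ (where regularity of $K$ and the geometric relation of $K$ to $\partial D$ enter through the constant $c_{D,K}$), obtaining $\|\chi u_n\|_{1,D}\to 0$. Combining the two pieces contradicts $\|u_n\|_{1,D}=1$, which completes the proof. The hardest part is unquestionably this last step: translating the ill-posed Cauchy continuation from $K$ to $D$ into a stable $H^1$-bound, which is made possible only because the $H^1(D)$-integrability of $u$ is built into the hypothesis and the Newton-potential estimate of Lemma~\ref{lem:one} converts compactly-supported $\widetilde{H}^{-1}(D)$-sources into $H^1(D)$-functions in a contrast-independent fashion.
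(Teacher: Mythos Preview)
Your compactness/unique-continuation strategy is genuinely different from the paper's, which argues directly: it represents $u$ in both $K$ and $K^c=D\setminus\overline K$ by single- and double-layer potentials on $\partial K$, tests against $\varphi\in\widetilde H^{-1}(D)$, and bounds the resulting Newton potentials $N_K\varphi$, $N_{K^c}\varphi$ in $H^1(D)$ via Lemma~\ref{lem:one}; no sequences or weak limits appear, and the constant is explicit.

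Your argument, however, has a real gap at the final step. In the split $u_n=(1-\chi)u_n+\chi u_n$ the first piece is indeed controlled (it lies in $H^1_0(D)$ with Laplacian small in $\widetilde H^{-1}(D)$), but for $\chi u_n$ you invoke a ``well-posed Dirichlet problem on the shell between $\supp\chi$ and $\partial D$''. That shell has \emph{two} boundary components: on the inner one $\chi u_n=0$, but on $\partial D$ the Dirichlet datum is $u_n|_{\partial D}$, which is precisely the unknown you are trying to bound. Replacing Dirichlet data by Cauchy data on the inner boundary gives the ill-posed Cauchy problem for the Laplacian, with no $H^1$-stability up to $\partial D$. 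This is not a mere technicality: take $D=B_1(0)\subset\R^2$, $K=B_{1/2}(0)$, and the harmonic functions $u_n(r,\theta)=r^n\cos(n\theta)$, normalized so that $\norm{u_n}_{1,D}=1$. Then $\norm{u_n}_{1,K'}\to 0$ for every $K'\Subset D$ and $u_n\rightharpoonup 0$ in $H^1(D)$, so your Steps~1--5 all go through verbatim, yet $\norm{u_n}_{1,D}=1$ for every $n$ and no contradiction is obtained. Interior strong convergence of harmonic functions simply does not upgrade to strong $H^1(D)$ convergence without control at $\partial D$. (The same sequence gives $\norm{u_n}_{1,D}/\norm{u_n}_{1,K}\sim 2^{\,n}\to\infty$, so the inequality you are asked to prove---and the missing $\partial D$ contribution in the paper's representation formula on $K^c$---both deserve a second look.)
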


\begin{proof}
Let $K^c:=D\setminus\overline{K}$.
Since $u$ is harmonic in $D$, it can be represented by its Cauchy
data
\[
u(x)= \begin{cases} (V\partial_\nu u)(x)-(Ku)(x), & x\in K,\\
-(V\partial_\nu u)(x)+(Ku)(x),& x\in K^c,\\
\end{cases}
\]
with the boundary integral operators
\[
(Vu)(x):=\int_{\partial K} S(x,y) u(y)\ud s_y\quad \text{and}\quad
(Ku)(x):=\int_{\partial K} \partial_{\nu,y} S(x,y) u(y)\ud s_y.
\]
Since $C_0^\infty(D)$ is dense in $\widetilde{H}^{-1}(D)$,
we may consider $\fie\in C_0^\infty(D)$ and estimate
\begin{align*}
\int_D u(x)\fie(x)\ud x &= 
\int_K u(x)\fie(x)\ud x+ \int_{K^c} u(x)\fie(x)\ud x\\
&=\int_{\partial K} (\partial_\nu u)(y)(N_K\fie)(y)\ud s_y
-\int_{\partial K} u(y) (\partial_\nu N_K\fie)(y)
\ud s_y\\
&\quad -\int_{\partial K} (\partial_\nu u)(y) (N_{K^c}\fie)(y)\ud s_y
+\int_{\partial K} u(y) (\partial_\nu N_{K^c}\fie)(y) \ud s_y\\
&\leq \norm{\partial_\nu u}_{H^{-1/2}(\partial
  K)}\norm{N_K\fie}_{H^{1/2}(\partial K)} + \norm{u}_{H^{1/2}(\partial
  K)}\norm{\partial_\nu N_K\fie}_{H^{-1/2}(\partial K)}\\
&\quad +\norm{\partial_\nu u}_{H^{-1/2}(\partial
  K)}\norm{N_{K^c}\fie}_{H^{1/2}(\partial K)} + \norm{u}_{H^{1/2}(\partial
  K)}\norm{\partial_\nu N_{K^c}\fie}_{H^{-1/2}(\partial K)}\\
&\leq 2c_K'\norm{u}_{H^1(K)}\left(\norm{N_K\fie}_{H^1(K)}+\norm{N_{K^c}\fie}_{H^1(K)}\right)\\
&\leq 2c_K'\norm{u}_{H^1(K)}\left(\norm{N_K\fie}_{H^1(D)}+\norm{N_{K^c}\fie}_{H^1(D)}\right)
\end{align*}
due to $\norm{u}_{H^{1/2}(\partial K)}\leq
\norm{u}_{H^1(K)}$ and
$\norm{\partial_\nu u}_{H^{-1/2}(\partial K)}\leq
c'_K\norm{u}_{H^1(K)}$. Lemma~\ref{lem:one} leads to
\[
\int_D u(x)\fie(x)\ud x\leq 4c'_Kc_D\norm{u}_{H^1(K)}\norm{\fie}_{\widetilde{H}^{-1}(D)}
\]
and thus
\[
\norm{u}_{H^1(D)}=\sup_{\fie\in
  \widetilde{H}^{-1}(D)}\frac{(u,\fie)_{L^2(D)}}{\norm{\fie}_{\widetilde{H}^{-1}(D)}}\leq 4c_K'c_D\norm{u}_{H^1(K)},
\]
which leads to the desired estimate $\norm{u}_{1,D}\leq
c_{D,K}\norm{u}_{1,K}$ with a constant $c_{D,K}>0$ depending on~$D$
and~$K$.
\end{proof}

\bibliographystyle{plain}

\begin{thebibliography}{10}

\bibitem{BarnesHut}
J.~Barnes and P.~Hut.
\newblock A hierarchical $\mathcal{O}({N}\log{N})$ force calculation algorithm.
\newblock {\em Nature}, 324:446--449, 1986.

\bibitem{InvFEM2}
M.~Bebendorf.
\newblock Efficient inversion of {G}alerkin matrices of general second-order
  elliptic differential operators with nonsmooth coefficients.
\newblock {\em Math. Comp.}, 74:1179--1199, 2005.

\bibitem{MBLU}
M.~Bebendorf.
\newblock Why finite element discretizations can be factored by triangular
  hierarchical matrices.
\newblock {\em SIAM J. Num. Anal.}, 45(4):1472--1494, 2007.

\bibitem{Bebendorf:2008}
M.~Bebendorf.
\newblock {\em {Hierarchical Matrices: A Means to Efficiently Solve Elliptic
  Boundary Value Problems}}, volume~63 of {\em Lecture Notes in Computational
  Science and Engineering (LNCSE)}.
\newblock Springer, 2008.
\newblock ISBN 978-3-540-77146-3.

\bibitem{bebeAlgInv}
M.~Bebendorf and T.~Fischer.
\newblock On the purely algebraic data-sparse approximation of the inverse and
  the triangular factors of sparse matrices.
\newblock {\em Num. Lin. Alg. Appl.}, 18:105--122, 2011.

\bibitem{MBWHInv}
M.~Bebendorf and W.~Hackbusch.
\newblock Existence of {$\mathcal{H}$}-matrix approximants to the inverse
  {FE}-matrix of elliptic operators with {$L^\infty$}-coefficients.
\newblock {\em Numer. Math.}, 95(1):1--28, 2003.

\bibitem{bebeMaxwell}
M.~Bebendorf and J.~Ostrowski.
\newblock Parallel hierarchical matrix preconditioners for the curl-curl
  operator.
\newblock {\em Journal of Computational Mathematics, special issue on Adaptive
  and Multilevel Methods for Electromagnetics}, 27(5):624--641, 2009.

\bibitem{BeOw10}
L.~Berlyand and H.~Owhadi.
\newblock Flux norm approach to finite dimensional homogenization
  approximations with non-separated scales and high contrast.
\newblock {\em Arch. Rational Mech. Anal.}, 198:677--721, 2010.

\bibitem{BrambleMG}
J.~H. Bramble.
\newblock {\em Multigrid methods}.
\newblock Longman Scientific \& Technical, 1993.

\bibitem{MR1367653}
M.~Dryja, M.~V. Sarkis, and O.~B. Widlund.
\newblock Multilevel {S}chwarz methods for elliptic problems with discontinuous
  coefficients in three dimensions.
\newblock {\em Numer. Math.}, 72(3):313--348, 1996.

\bibitem{EE03}
W.~E and B.~Engquist.
\newblock The heterogeneous multicale methods.
\newblock {\em Comm. Math. Sci.}, 1(1):87--132, 2003.

\bibitem{EH09}
Y.~Efendiev and T.Y. Hou.
\newblock {\em Multiscale Finite Element Methods: Theory and Applications}.
\newblock Springer, 2009.

\bibitem{EnZh04}
B.~Engquist and H.~Zhao.
\newblock Approximate separability of {G}reen's function for high frequency
  {H}elmholtz equations.
\newblock Technical Report CAM14-71, Department of Mathematics, University of
  California, Los Angeles, 2014.

\bibitem{FLLPR01}
C.~Farhat, M.~Lesoinne, P.~Le Tallec, K.~Pierson, and D.~Rixen.
\newblock {FETI-DP}: a dual-primal unified {FETI} method -- part {I}. {A}
  faster alternative to the two-level {FETI} method.
\newblock {\em International J. Numer. Methods Engrg.}, 50(7):1523--1544, 2001.

\bibitem{FETI}
C.~Farhat and F.-X. Roux.
\newblock A method of finite element tearing and interconnecting and its
  parallel solution algorithm.
\newblock {\em Int. J. Numer. Meth. Engrg.}, 32:1205--1227, 1991.

\bibitem{EfGa09}
J.~Galvis and Y.~Efendiev.
\newblock Domain decomposition preconditioners for multiscale flows in high
  contrast media: reduced dimension coarse spaces.
\newblock {\em Multiscale Model. Simul.}, 8(5):1621--1644, 2010.

\bibitem{GT}
D.~Gilbarg and N.~S. Trudinger.
\newblock {\em Elliptic partial differential equations of second order}.
\newblock Springer, Berlin, 2001.
\newblock Reprint of the 1998 edition.

\bibitem{GRHA03}
L.~Grasedyck and W.~Hackbusch.
\newblock Construction and arithmetics of {${\mathcal{H}}$}-matrices.
\newblock {\em Computing}, 70:295--334, 2003.

\bibitem{MR88k:82007}
L.~F. Greengard and V.~Rokhlin.
\newblock A fast algorithm for particle simulations.
\newblock {\em J. Comput. Phys.}, 73(2):325--348, 1987.

\bibitem{MR99c:65012}
L.~F. Greengard and V.~Rokhlin.
\newblock A new version of the fast multipole method for the {L}aplace equation
  in three dimensions.
\newblock In {\em Acta numerica, 1997}, volume~6 of {\em Acta Numer.}, pages
  229--269. Cambridge Univ. Press, Cambridge, 1997.

\bibitem{MR83h:35033}
M.~Gr{\"u}ter and K.-O. Widman.
\newblock The {G}reen function for uniformly elliptic equations.
\newblock {\em Manuscripta Math.}, 37(3):303--342, 1982.

\bibitem{MG}
W.~Hackbusch.
\newblock {\em Multi-Grid Methods and Applications}.
\newblock Springer, 1985.

\bibitem{MR2000c:65039}
W.~Hackbusch.
\newblock A sparse matrix arithmetic based on {$\mathcal{H}$}-matrices. {P}art
  {I}: {I}ntroduction to {$\mathcal{H}$}-matrices.
\newblock {\em Computing}, 62(2):89--108, 1999.

\bibitem{wh09}
W.~Hackbusch.
\newblock {\em {H}ierarchische {M}atrizen}.
\newblock Springer Verlag, 2009.

\bibitem{MR2001i:65053}
W.~Hackbusch and B.~N. Khoromskij.
\newblock A sparse {$\mathcal{H}$}-matrix arithmetic. {P}art {II}:
  {A}pplication to multi-dimensional problems.
\newblock {\em Computing}, 64(1):21--47, 2000.

\bibitem{HW97}
T.~Y. Hou and X.~Wu.
\newblock A multiscale finite element method for elliptic problems in composite
  materials and porous media.
\newblock {\em J. Comput. Phys.}, 134:169--189, 1997.

\bibitem{klWiDr96}
A.~Klawonn, O.~B. Widlund, and M.~Dryja.
\newblock Dual-primal {F}{E}{T}{I} methods for three-dimensional elliptic
  problems with heterogeneous coefficients.
\newblock {\em SIAM J. Numer. Anal}, 40(1):159--179, 2002.

\bibitem{WiNeJa01}
N.~Neuss, W.~J\"ager, and G.~Wittum.
\newblock Homogenization and multigrid.
\newblock {\em Computing}, 66:1--26, 2001.

\bibitem{PeSaSch13}
C.~Pechstein, M.~Sarkis, and R.~Scheichel.
\newblock New theoretical coefficient robustness results for {FETI}-{DP}.
\newblock In R.~Bank, R.~Kornhuber, and O.~Widlund, editors, {\em Domain
  Decomposition Methods in Science and Engineering XX}, Lecture Notes in
  Computational Science and Engineering. Springer-Verlag, 2013.

\bibitem{PeSch09a}
C.~Pechstein and R.~Scheichel.
\newblock Scaling up through domain decomposition.
\newblock {\em Appl. Anal.}, 88(10):1589--1608, 2009.

\bibitem{PeSch09}
C.~Pechstein and R.~Scheichel.
\newblock Weighted {P}oincar\'e inequalities and applications in domain
  decomposition.
\newblock In Y.~Huang, R.~Kornhuber, O.~Widlund, and J.~Xu, editors, {\em
  Domain Decomposition Methods in Science and Engineering XIX}, pages 197--204,
  Berlin, 2010. Springer-Verlag.

\bibitem{B7:PechSch08}
C.~Pechstein and R.~Scheichl.
\newblock Analysis of {FETI} methods for multiscale {PDE}s.
\newblock {\em Numer. Math.}, 111(2):293--333, 2008.

\bibitem{AMG}
J.~W. Ruge and K.~St\"uben.
\newblock Algebraic multigrid.
\newblock In S.~F. McCormick, editor, {\em Multigrid Methods}, pages 73--130.
  SIAM, 1987.

\bibitem{STEINB_BOOK}
O.~Steinbach.
\newblock {\em Numerische {N}\"aherungsverfahren f\"ur elliptische
  {R}andwertprobleme: {F}inite {E}lemente und {R}andelemente}.
\newblock B. G. Teubner, Stuttgart, Leipzig, Wiesbaden, 2003.

\bibitem{MR99f:15005}
E.~E. Tyrtyshnikov.
\newblock Mosaic-skeleton approximations.
\newblock {\em Calcolo}, 33(1-2):47--57, 1996.
\newblock Toeplitz matrices: structures, algorithms and applications (Cortona,
  1996).

\end{thebibliography}

\end{document}